\documentclass[10pt]{article}
\usepackage[utf8]{inputenc}

\usepackage{amssymb,amsthm,amsmath}
\usepackage{mathrsfs}
\usepackage{enumerate}
\usepackage{graphicx,xcolor}
\usepackage{setspace}
\usepackage[hidelinks]{hyperref}

\usepackage[scr=boondox,  
            cal=esstix]   
           {mathalpha}

\newcommand{\dd}{\mathrm{d}}
\newcommand{\E}{\mathbb{E}}

\newcommand{\1}{\textbf{1}}
\newcommand{\R}{\mathbb{R}}

\newcommand{\cE}{\mathscr{E}}
\newcommand{\hh}{\mathscr{h}}

\DeclareMathOperator{\Var}{Var}
\DeclareMathOperator{\sgn}{sgn}

\newtheorem{theorem}{Theorem}
\newtheorem{lemma}[theorem]{Lemma}

\theoremstyle{remark}
\newtheorem{remark}[theorem]{Remark}

\theoremstyle{definition}

\title{\vspace{-3em}
Moments of sums of exponentials, beyond CHS
}

\author{Silouanos Brazitikos$^\dagger$, \ Colin Tang$^*$, \
Tomasz Tkocz\footnote{Email: ttkocz@math.cmu.edu. Research supported in part by NSF grant DMS-2246484.}
}

\date{\begin{normalsize}
\emph{$^\dagger$Department of Mathematics and Applied Mathematics, University of Crete, 70013 Heraklion - Crete, Greece.} \\\vspace*{0.7em}
\emph{$^*$Department of Mathematical Sciences, Carnegie Mellon University, Pittsburgh, PA 15213, USA}
\end{normalsize}}

\begin{document}

\maketitle

\begin{abstract}
We establish a sharp lower bound on the $L_p$-norm of sums of independent exponential random variables with fixed variance, for $p \geq 2$, thus extending Hunter's positivity theorem (1976) for completely homogeneous polynomials. We determine the exact regime of $p$ where such sums enjoy Schur-monotonicity.
\end{abstract}

\bigskip

\begin{footnotesize}
\noindent {\em 2020 Mathematics Subject Classification.} Primary 60E15; Secondary 26D15.

\noindent {\em Key words. Exponential distribution, Sums of independent random variables, Sharp moment comparison, Completely homogeneous polynomials, Majorisation, Schur convexity.} 
\end{footnotesize}

\bigskip

\section{Introduction}

The importance of exponential distribution in probability theory perhaps cannot be overstated. This is deeply engraved in its memoryless property (uniquely characterising it). As it naturally models the waiting time between independent random events occurring at a constant rate, the exponential distribution intrinsically connects to the Poisson process, making it essential for describing event arrival patterns, allowing for tractable analysis in survival modelling, reliability theory, and queueing systems, just to name a few concrete areas of its broad theoretical and practical applicability (see, e.g. \cite{Bal, King}). This is also related to the exponential distribution serving  as a building block for more complex distributions (say the Gamma, or Weibull ones), as well as continuous-time stochastic processes.

\subsection{Geometric motivation: simplex slicing}
Our main motivation stems from an inherent connection of the exponential distribution to the uniform measure on a simplex -- see Theorems 2.1 and 2.2 in Chapter 5 of \cite{Luc} -- and, more specifically, the study of extremal volume sections of the regular simplex, initiated in \cite{Webb} followed up in \cite{Brz}, quite recently reinvigorated in \cite{MRTT, MTTT}. In essence, the volume of such sections can be expressed probabilistically in terms of the density of sums of independent exponential random variables, for details see \cite{Webb}, or (1) in \cite{MRTT}. This point of view has naturally prompted questions about sharp $L_p$ bounds for sums of independent random variables, with the singular limiting case $p \downarrow -1$ corresponding to volume. We refer to the survey \cite{NT-surv} showcasing this paradigm, as well as to \cite{CKT, CST, MRTT} for results along those lines providing probabilistic extensions to Ball's cube-slicing \cite{Ball-cube}, the Oleszkiewicz-Pe\l czy\'nski polydisc slicing \cite{OP}, and Webb's simplex-slicing \cite{Webb}, respectively. 

On the other hand, classically, sharp $L_p$ bounds have been well-studied for positive $p$ in probability theory, the body of results often referred to as \emph{Khinchin-type} inequalities (the term coined after Khinchin's classical applications of those to his study on the law of the iterated logarithm \cite{Khi}). Khinchin-type inequalities concern simply reversals to H\"older type inequalities, up to multiplicative constants. Namely, one seeks sharp \emph{upper} bounds on the $L_q$-norm by the $L_p$-norm for $p < q$, thus reversing the generic standard H\"older's bound $\|\cdot\|_p \leq \|\cdot\|_q$. Intriguingly, this has also been fueled  by the development of local theory of Banach spaces (see \cite{HNVW, KST, LT}). 

The main point of this paper is to initiate the study of sharp constants in (forward) H\"older inequalities for sums of independent exponentials. In particular, for every $p \geq 2$, we find the largest constant $c_p$ such that
\begin{equation}\label{eq:Lp-L2-intro}
\|X\|_p \geq c_p\|X\|_2
 \end{equation}
holds for every random variable $X$ which is a sum of independent \emph{centred} exponential random variables. (This has been lately asked in \cite{MRTT}, see Sections 3.2 and 3.3 therein.)

We employ the standard notation that given $p \in \R$, for a random variable $X$, $\|X\|_p = (\E|X|^p)^{1/p}$ is its $L_p$ ``norm'' (which of course is \emph{not} a norm per se when $p < 1$).

\subsection{Algebraic motivation: Hunter's positivity theorem}
Even moments of sums of exponential random variables are also naturally linked to a fundamental algebraic object, the complete homogeneous symmetric (CHS) polynomials, which play a pivotal role in the classical theory of symmetric functions (see, e.g. \cite{Mac}). A classical result of Hunter from \cite{Hun} asserts that CHS polynomials are positive definite functions. In fact, he showed a quantitative lower bound which, probabilistically, amounts to \eqref{eq:Lp-L2-intro} with asymptotically sharp $c_p$, specialised to all even values of $p$. This has had interesting applications and connections to certain topics in analysis such as matricial norms \cite{ACGV}, positivity preservers \cite{KT}, or Maclaurin's inequalities \cite{Tao}. We also refer to the recent survey \cite{BCG}. The study of bounds on CHS polynomials has been very recently revived in \cite{BP}, where Hunter's theorem has been strengthen to a nonasymptotically sharp bound, among many other interesting results.

\subsection{Specifics and our results}

For the notation prevailing throughout this note, we let $\cE_1, \cE_2, \dots, \cE_1', \cE_2', \dots$, $\cE, \cE', \cE'', \dots$, etc. be independent identically distributed (i.i.d.) standard exponential random variables (that is, with density $e^{-x}\1_{[0,+\infty)}(x)$ on $\R$), with mean~$1$ and variance $1$.

As mentioned, the simplex-slicing directly pertains to this work. The main result of \cite{MRTT} establishes its probabilistic extension: for every log-concave random variable $X$ (that is, continuous with density of the form $e^{-V}$ for a convex function $V\colon \R \to (-\infty, +\infty])$), we have
\begin{equation}\label{eq:MRTT1}
\frac{\|X - \E X\|_p}{\|X - \E X\|_1}  \geq \|\cE - \cE'\|_p, \quad \text{for $-1 < p \leq 1$}
 \end{equation}
and
\begin{equation}\label{eq:MRTT2}
\frac{\|X - \E X\|_p}{\|X - \E X\|_1}  \leq \begin{cases} \|\cE - \cE'\|_p, & \text{for $1 \leq p \leq p_\star$} \\ \|\frac{2}{e}(\cE - 1)\|_p, & \text{for $p \geq p_\star$}, \end{cases}
 \end{equation}
where constant $p_\star = 2.9414..$ is defined as the unique solution to the equation $\|\cE - \cE'\|_p= \|\frac{2}{e}(\cE - 1)\|_p$ in $p \in (1, +\infty)$. We emphasise that $\|\cE - \cE'\|_1 = \|\frac{2}{e}(\cE - 1)\|_1 = 1$, so these bounds are sharp. Analysing appropriately the limiting case $p \downarrow -1$ of the first bound specialised to $X$ of the form $\sum x_j(\cE_j-1)$ allows to recover \cite{Webb} (for details, see \cite{MRTT}, Theorem 1 and the paragraph following it).

The complete homogeneous symmetric (CHS) polynomial $\hh_\ell(x)$ of degree $\ell$ in $n$ variables $x_1, \dots, x_n$ is the sum of all distinct degree-$\ell$ monomials in the given variables,
\begin{equation}\label{eq:def-h}
\hh_\ell(x) = \hh_\ell(x_1, \dots, x_n) = \sum_{1 \leq j_1 \leq \dots \leq j_\ell \leq n} x_{j_1}x_{j_2}\cdot\ldots\cdot x_{j_\ell}
\end{equation}
with the usual convention that $h_0(x) \equiv 1$.
Equivalently, by means of their generating functions, the CHS polynomials are defined by
\[ 
\prod_{j=1}^n \frac{1}{1-x_jt} = \sum_{\ell=0}^\infty \hh_\ell(x)t^\ell.
 \]
Since the left hand side is the moment generating function of the sum of independent exponentials with means $x_j$,
\[ 
\sum_{\ell=0}^\infty \frac{\E(\sum x_j\cE_j)^\ell}{\ell!}t^\ell = \E e^{t\sum_{j=1}^n x_j\cE_j} = \prod_{j=1}^n \E e^{tx_j\cE_j} = \prod_{j=1}^n \frac{1}{1-x_jt}, \qquad |t| < \min_j \big(|x_j|^{-1}\big),
 \]
the natural probabilistic representation of CHS polynomials as moments of sums of exponentials presents itself as follows,
\[ 
\hh_\ell(x) = \frac{1}{\ell!}\E\left(\sum_{j=1}^n x_j\cE_j\right)^\ell, \qquad \ell = 0, 1, \dots.
 \]
In view of this identity, it is obvious that $\hh_\ell(x) > 0$ for all even $\ell$ and all nonzero real vectors $x$ (note $\hh_\ell(-x) = (-1)^\ell\hh_\ell(x)$, so such positivity is bluntly false when $\ell$ is odd). On the other hand, based solely on the algebraic definition \eqref{eq:def-h}, this positivity property perhaps may not be so clear and Hunter's theorem of \cite{Hun} was conceived to explain that in a quantitative way, asserting the sharp bound,
\[
\hh_\ell(x) \geq \frac{1}{2^{\ell/2}(\ell/2)!}, \qquad \ell = 0, 2, 4, \dots, \qquad \text{provided that} \qquad \sum_{j=1}^n x_j^2 = 1.
 \]
We ought to mention in passing that with a modification of Hunter's argument, Tao has lately remarked a Schur-monotonicity result of the map $x \mapsto \hh_\ell(x)$ ($\ell$ even), which also gives Hunter's positivity theorem: $\hh_\ell(x) \geq 0$ for all even $\ell$, with strict inequality unless $x = 0$, see \cite{Tao-blog}.

Put equivalently as a forward H\"older-type inequality with a sharp constant, Hunter's theorem states
\begin{equation}\label{eq:Hunter}
\E\left|\sum_{j=1}^n x_j\cE_j\right|^\ell \geq (\E G^{\ell})\cdot \left(\sum_{j=1}^n x_j^2\right)^{\ell/2}, \qquad \ell = 0, 2, 4, \dots,
\end{equation}
where $G$ is a standard Gaussian random variable, so $\E G^\ell = (\ell-1)!! = \frac{\ell!}{2^{\ell/2}(\ell/2)!}$, $\ell = 0, 2, 4, \dots$.

Our main result extends this bound to all values of $\ell \geq 2$.

\begin{theorem}\label{thm:p>2}
Let $p \geq 2$. For every $n \geq 1$ and real numbers $x_1, \dots, x_n$, we have
\begin{equation}\label{eq:main-p>2}
\|X\|_p \geq \|G\|_p\sqrt{\Var(X)}, \qquad X = \sum_{j=1}^n x_j\cE_j,
\end{equation}
where $G \sim N(0,1)$ is a standard Gaussian random variable.
\end{theorem}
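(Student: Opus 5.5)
The plan is to replace the exponentials one at a time by Gaussians, using a one-dimensional comparison inequality, and to absorb the mean of $X$ only at the end. Write $X = \sum_j x_j\cE_j = m + \sum_j x_j(\cE_j - 1)$ with $m = \sum_j x_j$, and note that $\Var(X) = \sum_j x_j^2 =: \sigma^2$.

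\textbf{Step 1 (key lemma).} For every $p \geq 2$ and every $b \in \R$,
\[
\E|b + \cE - 1|^p \geq \E|b + G|^p.
\]
The natural first attempt, comparing $\cE$ with a centred Gaussian, is not viable: already for $p=2$ one has $\E|\cE-1|^2 = 1 < 2 = \E|G-1|^2$. The lemma instead matches the first two moments, so it holds with equality for all $b$ when $p=2$.

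Replacing $b$ by $-b$ and using the symmetry of $G$ gives the companion inequality $\E|b - (\cE-1)|^p \geq \E|b+G|^p$. By scaling, for every real $x$ and $b$ we then get $\E|b + x(\cE-1)|^p \geq \E|b + xG|^p$.

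\textbf{Step 2 (iteration).} Let $Y$ be a random variable independent of $\cE$ and $G$. Both sides of the scaled inequality are linear in the law of $b$, so conditioning on $Y$ gives
\[
\E|Y + x(\cE-1)|^p \geq \E|Y + xG|^p.
\]
Apply this successively to $j = 1, \dots, n$, each time putting into $Y$ the constant $m$ together with the terms not yet replaced. This yields
\[
\E|X|^p \geq \E\Big|m + \sum_j x_jG_j\Big|^p = \E|m + \sigma G|^p .
\]

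\textbf{Step 3 (removing the mean).} Since $G$ is symmetric and $t \mapsto |t|^p$ is convex,
\[
\E|m + \sigma G|^p = \tfrac12\E\big(|m+\sigma G|^p + |m - \sigma G|^p\big) \geq \E|\sigma G|^p = \|G\|_p^p\,\sigma^p,
\]
which is \eqref{eq:main-p>2}.

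\textbf{The main obstacle} is Step 1. Set $f(b) = \E|b+\cE-1|^p - \E|b+G|^p$. I would first check the two asymptotic regimes. As $|b| \to \infty$, expanding in $1/b$ shows the first nontrivial discrepancy is the third-moment term $\binom{p}{3}\E(\cE-1)^3\,|b|^{p-3}\sgn(b)$, which has the wrong sign for $b \to -\infty$. So that regime must be handled with care: the fourth cumulant (equal to $6$ for $\cE-1$, versus $0$ for $G$) may dominate, or the lemma may need a modified shift.

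For a global argument I would try one of two routes. The first is an integral representation such as
\[
|t|^p = c_p \int_0^\infty \big(1 - \cos(st) - \text{Taylor terms}\big)\, s^{-1-p}\,\dd s
\]
for $p$ in bands $(2k, 2k+2)$, combined with the explicit characteristic function $e^{-is}/(1-is)$ of $\cE-1$ versus $e^{-s^2/2}$. The second is to study $f''(b) = p(p-1)\big(\E|b+\cE-1|^{p-2} - \E|b+G|^{p-2}\big)$ inductively in $p$, using that $f$ is explicit via incomplete Gamma functions.

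If the pointwise lemma fails for negative $b$, I would fall back to proving the comparison only after symmetrising over the sign of $x_j$. In that approach the terms with negative $x_j$ are grouped with the mean $m$ in a coupled way, rather than treating each exponential in isolation.
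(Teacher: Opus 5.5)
Your Step 1 is not a regime that merely needs care: it is false for every $p>2$, and your own expansion already shows it. Set $f(b)=\E|b+\cE-1|^p-\E|b+G|^p$. As $|b|\to\infty$ one has $f(b)=2\binom{p}{3}|b|^{p}b^{-3}+6\binom{p}{4}|b|^{p}b^{-4}+O(|b|^{p-5})$. Hence $f(b)\sim-2\binom{p}{3}|b|^{p-3}<0$ as $b\to-\infty$. The fourth cumulant enters only at the lower order $|b|^{p-4}$, so it cannot dominate. For $p=4$ everything is explicit: $f(b)=8b+6$, and for instance $\E(\cE-2)^4=8<10=\E(G-1)^4$.

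Nor is this a defect of Step 1 alone that a different replacement scheme could bypass, because the conclusion of Step 2 itself, $\E|X|^p\ge\E|m+\sigma G|^p$, is false. Take $X=\cE-\frac{c}{N}\sum_{j=1}^N\cE_j'$, so that $m=1-c$ and $\sigma^2=1+c^2/N$. Letting $N\to\infty$ gives $\E|X|^p\to\E|1-c+(\cE-1)|^p$ and $\E|m+\sigma G|^p\to\E|1-c+G|^p$. So Step 1 is precisely a limiting case of what Step 2 asserts. A finite instance at $p=4$: $\E X^4-\E(m+\sigma G)^4=8m\sum_j x_j^3+6\sum_j x_j^4$, which equals $-\frac{13}{16}$ for $x=(1,-\frac14,\dots,-\frac14)$ with eight copies of $-\frac14$.

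The theorem survives only because it compares with $\sigma\|G\|_p$ rather than $\|m+\sigma G\|_p$. The slack from discarding the mean is genuinely needed. Your fallback (symmetrising over signs, coupling negative terms with $m$) does not yet say how to use it.

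The paper's argument avoids any pointwise swap. It looks at critical points of $x\mapsto\E|S_x|^p$ on the sphere $\sum_j x_j^2=1$. It uses an identity specific to exponentials, obtained by integration by parts: $\frac{\partial}{\partial x_j}\E\Phi(S_x)=\E\Phi'(S_x+x_j\cE)$. Together with the Lagrange equations and $p$-homogeneity, at a critical point with $x_1\neq x_2$ this gives
$\E|S_x|^p=(p-1)\E|S_x+x_1\cE+x_2\cE'|^{p-2}$.
This trades the lower bound at $p$ for a lower bound at $p-2$, for a sum of variance $1+x_1^2+x_2^2>1$. Induction then reduces everything to $q=p-2\in(0,2)$.

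In that range, the Fourier formula for $\E|Y|^q$ converts the needed lower bound into an upper bound on the characteristic function. Taking $x_1$ a coefficient of largest modulus and $s=x_1^{-2}$, one gets $\mathrm{Re}\,\phi_Y(t)\le(1+t^2/s)^{-(1+s)/2}$. The monotonicity of $\Psi_{q/2}$ then yields $\E|Y|^q\ge\E|G|^q$. Critical points with all coordinates equal are handled separately via the Gaussian-mixture identity $\cE-\cE'\overset{d}{=}\sqrt{2\cE}\,G$.
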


This provides a sharp reversal to \eqref{eq:MRTT2} for the $L_p$-variance comparison specialised to sums of independent exponentials, which of course becomes \eqref{eq:Lp-L2-intro} in the centred case, $\E X = 0$.

\begin{remark}\label{rem:conj-fixed-n}
This bound is (asymptotically) sharp as seen by taking $n = 2m$, $x = x_\star$ with $x_{\star, 1} = -x_{\star, 2} = x_{\star, 3} = -x_{\star, 4} = \dots = \frac{1}{\sqrt{2m}}$ and letting $m \to \infty$. We conjecture that  for a fixed \emph{even} number of summands $n$, the infimal value of the $L_p$ norm, $p \geq 2$, that is the infimal value of the left hand side of \eqref{eq:main-p>2} is attained at the balanced vector of half plus-minus ones, $x =  x_\star$. The very recent result of \cite{BP} strongly supports this, where the conjecture is established for all integral even values of exponent $p$.
\end{remark}

\begin{remark}\label{rem:relax}
A natural relaxation of \eqref{eq:main-p>2}, that is the minimisation problem of  the $L_p$-norm ($p \geq 2$) over a class of distributions from the class of sums of independent exponentials to the class of, say centred unimodal (or log-concave) random variables with the $L_2$-norm fixed leads to a different than Gaussian extremiser, namely the uniform one, see, e.g. Remark 16 in \cite{ENT2}. This is in striking contrast to \cite{MRTT}, where the original problem for the sums of exponentials is solved in the larger class of all log-concave centred random variables, leading to \eqref{eq:MRTT1} and \eqref{eq:MRTT2}. In that sense, bound \eqref{eq:main-p>2} somehow exploits the structure of sums of independent exponentials. 
\end{remark}

\begin{remark}\label{rem:neg-mom}
In a similar vein, where exponentials pivot an answer to an extremisation problem, we also ought to mention Tang's recent result \cite{T} on minimal volume sections of the simplex, saying that 
\[
f_{\sum x_j(\cE_j-1)} \geq f_{\cE-1}(0) = \frac{1}{e},
\]
in the setting of Theorem \ref{thm:p>2}, where $f_X$ here denotes the (continuous version of) density of a random variable $X$ (see also ``Fake Theorem 4'' therein). Equivalently (via a negative moment formula for the density, see, e.g. (1) in \cite{CNT}), 
\[
\lim_{p\downarrow -1} (1+p)\left\|\sum x_j(\cE_j-1)\right\|_p \leq \lim_{p\downarrow -1} (1+p)\left\|\cE-1\right\|_p.
 \]
In view of \eqref{eq:main-p>2}, we conjecture that for every $-1 < p < 2$,
\[ 
\sup\left\{\left\|\sum x_j(\cE_j-1)\right\|_p, \ n \geq 1, \sum_{j=1}^n x_j^2 = 1\right\} = \begin{cases} \|\cE-1\|_p, & -1 < p \leq p_0, \\ \|G\|_p, & p_0 \leq p < 2. \end{cases}
 \]
That is, there is a phase transition of the extremising distribution at $p_0 = -0.565..$, defined as a unique value of $p \in (-1, 2)$ for which the two $L_p$ norms on the right hand side coincide. 
\end{remark}

For low moments, we offer Schur-monotonicity results for sums with nonnegative coefficients (see, e.g. Chapter II in \cite{Bh} for very basics on majorisation). This setting allows to use the techniques of completely monotone functions (see \cite{Wid} for background).

Let $p > -1$ and define
\begin{equation}\label{eq:def-Mp}
M_p(x_1, \dots, x_n) = \E \left[\left(\sum_{j=1}^n \sqrt{x_j}\cE_j\right)^p\right], \qquad x_1, \dots, x_n \geq 0.
\end{equation}
We fully characterise the ranges of $p$ for which this function enjoys Schur-monotonicity.

\begin{theorem}\label{thm:Schur}
For an arbitrary integer $n \geq 2$, function $M_p$ is Schur-convex for $-1 < p < 0$, and Schur-concave for $0 \leq p \leq 4$ on $\R_+^n$. Moreover, this function fails to be Schur-monotone as long as $p > 4$.
\end{theorem}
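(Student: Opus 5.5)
The plan rests on the Schur--Ostrowski criterion. $M_p$ is symmetric and smooth on the interior of $\R_+^n$, so it suffices to look at two coordinates, say $x_1,x_2$, with $x_1+x_2$ fixed. I condition on $S=\sum_{j\geq 3}\sqrt{x_j}\cE_j\geq 0$, which is independent of $\cE_1,\cE_2$. Then it is enough to show, for every fixed $s\geq 0$, that
\[
F_s(x_1,x_2)=\E\left(s+\sqrt{x_1}\cE_1+\sqrt{x_2}\cE_2\right)^p
\]
is Schur-convex (resp.\ Schur-concave) in $(x_1,x_2)$, and then integrate over the law of $S$. With $a=\sqrt{x_1}$, $b=\sqrt{x_2}$, the key observation is that both $a+b$ and $ab$ are Schur-concave functions of $(x_1,x_2)$.

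For $-1<p<0$ I use $t^p=c_p\int_0^\infty e^{-ut}u^{-1-p}\,\dd u$ with $c_p>0$, which gives
\[
F_s=c_p\int_0^\infty \frac{e^{-us}\,u^{-1-p}}{1+u(a+b)+u^2ab}\,\dd u .
\]
The denominator is Schur-concave with positive terms, so the integrand is Schur-convex pointwise, and hence so is $F_s$. For $0<p<1$ I use $t^p=c_p\int_0^\infty(1-e^{-ut})u^{-1-p}\,\dd u$; the sign flips and I get Schur-concavity. The case $p=0$ is trivial.

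For $1\leq p\leq 4$ the positive-weight representation breaks down, because the Lévy--Khintchine-type kernels with compensators $-1+ut-\dots$ mix signs. There I plan to use instead the divided-difference identity
\[
\E f(a\cE_1+b\cE_2)=\frac{a\,\E f(a\cE)-b\,\E f(b\cE)}{a-b},
\]
which comes from the density $(e^{-t/a}-e^{-t/b})/(a-b)$. With $g_s(a)=a\,\E(s+a\cE)^p$ this reads $F_s=[a,b;g_s]$. The Schur--Ostrowski condition $(x_1-x_2)(\partial_{x_1}-\partial_{x_2})F_s\leq 0$ then becomes an inequality between divided differences of $g_s$ and $g_s'$ along the circle $a^2+b^2=\mathrm{const}$. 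I would try to write $g_s(a)=\int_0^\infty (s+at)^p\,a\,e^{-t}\,\dd t$ and reduce the condition to a one-dimensional inequality in $p$, checked by Taylor/convexity arguments in $a/b$. The integer cases $p=1,2,3,4$, where $F_s$ is a polynomial, serve as sanity checks.

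This step is the main obstacle. If the direct approach stalls, I would fall back on interpolation: bound $t^p$ for $p\in(2,4)$ between a quadratic and a quartic polynomial plus a completely monotone remainder, handle the polynomial part by the explicit CHS computation, and handle the remainder as in the $p<1$ case.

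For the failure when $p>4$, I take $n=2$, since zero coordinates can be padded to reach any $n$, and $s=0$. Parametrising $x=(\cos^2\theta,\sin^2\theta)$, the formula $\E(a\cE_1+b\cE_2)^p=\Gamma(p+1)\frac{a^{p+1}-b^{p+1}}{a-b}$ gives
\[
M_p=\Gamma(p+1)\,\frac{\cos^{p+1}\theta-\sin^{p+1}\theta}{\cos\theta-\sin\theta}.
\]
Expanding to second order at $\theta=\pi/4$ should show that the balanced point is a strict local minimum for $p>4$, with a sign change exactly at $p=4$. This excludes Schur-concavity. Comparing $\theta=\pi/4$ with $\theta=0$, that is $(\tfrac12,\tfrac12)$ versus $(1,0)$, should give $M_p(\tfrac12,\tfrac12)<M_p(1,0)$ for large $p$, which excludes Schur-convexity. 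If that global comparison is not decisive for $p$ slightly above $4$, I would instead use a second-order expansion near the endpoint, or a three-point comparison along the curve.
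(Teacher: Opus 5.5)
Your treatment of $-1<p<1$ is correct. Conditioning on $S=\sum_{j\ge3}\sqrt{x_j}\cE_j$ is a legitimate reduction, since for a symmetric function pairwise Schur-monotonicity suffices via T-transforms. The Schur-concavity of $\sqrt{x_1}+\sqrt{x_2}$ and $\sqrt{x_1x_2}$ then makes your integrand Schur-convex (resp.\ concave) pointwise.

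\textbf{The main gap: $1\le p\le 4$.} This range is the substance of the theorem, and you leave it as a plan. Your diagnosis of the difficulty is also off: the kernels do not mix signs. For $k<p<k+1$ one has $t^p=C_p^{-1}\int_0^\infty Q_k(ut)u^{-p-1}\,\dd u$ with $Q_k(t)=(-1)^{k+1}\bigl(e^{-t}-\sum_{j\le k}(-t)^j/j!\bigr)>0$, which is a Taylor remainder. So it suffices to show that $F_k(x)=\E Q_k\bigl(\sum_j\sqrt{x_j}\cE_j\bigr)$ is Schur-concave for $k=0,1,2,3$; integer $p$ then follow by continuity.

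What breaks is only the soft termwise argument. $F_k$ is a signed combination of $P(x)=\prod_j(1+\sqrt{x_j})^{-1}$, which is Schur-convex, and the integer moments $M_1,\dots,M_k$. For every $k\ge1$ some term has the wrong Schur-direction, e.g.\ $F_1=P-1+M_1$ and $F_2=-P+1-M_1+\frac12M_2$. This is also why your fallback cannot work as stated:
\begin{itemize}
\item the exponential and polynomial parts cannot be split, since each diverges after integrating in $u$ and the exponential part points the wrong way;
\item sandwiching $t^p$ between polynomials gives no exact Schur statement.
\end{itemize}

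The missing step is a quantitative comparison. With $b_j=\sqrt{x_j}$, one gets $(\partial_{x_1}-\partial_{x_2})F_k=\frac{b_2-b_1}{2b_1b_2}B_k$, where for instance $B_1=1-\frac{1+b_1+b_2}{(1+b_1)(1+b_2)}P$ and $B_2=\frac{1+b_1+b_2}{(1+b_1)(1+b_2)}P-1+\sum_jb_j$, and similarly for $k=3$. One must then prove $B_k>0$. For $k=2$ this is the genuine inequality $\frac{1+b_1+b_2}{(1+b_1)(1+b_2)}>\bigl(1-\sum_jb_j\bigr)\prod_j(1+b_j)$.

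Your divided-difference reformulation does not reach anything of this kind. After scaling it is still a two-parameter family of inequalities, in $a/b$ and $s/b$, for each $p$ — not a one-dimensional inequality. You also give no mechanism for verifying it.

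\textbf{The failure part, $p>4$.} Your second-order analysis at $\theta=\pi/4$ is right: the quadratic coefficient is a positive multiple of $p(p+1)(p-4)$, and this rules out Schur-concavity.

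Your argument against Schur-convexity, however, is backwards. Since $(\tfrac12,\tfrac12)\prec(1,0)$, the inequality $M_p(\tfrac12,\tfrac12)<M_p(1,0)$ is exactly what Schur-convexity predicts, so it excludes nothing. In fact $M_p(\tfrac12,\tfrac12)/M_p(1,0)=(p+1)2^{-p/2}$. This ratio exceeds $1$, and so does rule out Schur-convexity, only for $4<p<p_1$ with $p_1\approx5.3$ — precisely the regime you worried about. For larger $p$ it is below $1$.

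The fix, which is the paper's argument, is first order at the endpoint. The derivative of $\theta\mapsto\frac{\cos^{p+1}\theta-\sin^{p+1}\theta}{\cos\theta-\sin\theta}$ at $\theta=0$ equals $1>0$, so moving from $(1,0)$ toward balance increases $M_p$ for every $p>0$. Combined with the strict local minimum at $\theta=\pi/4$, this forces an interior local maximum. Hence $M_p$ is not monotone along the curve, and is neither Schur-convex nor Schur-concave.
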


The regime $-1 < p < 0$ has been addressed in \cite{CST0} (see Theorem 2 therein). Theorem \ref{thm:Schur} complements the recent results of \cite{BP} which provides sharp results for integral values of~$p$, as well as \cite{Tao-blog} which establishes the Schur-convexity under the usual $\ell_1$-constraint, viz. of the function $x \mapsto \hh_\ell(x)$.

The rest of this paper is devoted to our proofs of Theorems \ref{thm:p>2} and \ref{thm:Schur}, presented in Sections \ref{sec:proof-p>2} and \ref{sec:proof-Schur}, respectively. We finish off with concluding remarks

\section{Proofs: Hunter's positivity, beyond CHS (Theorem \ref{thm:p>2})}\label{sec:proof-p>2}

\subsection{Overview}
We build upon Hunter's original strategy from \cite{Hun}. At low-resolution, this entails a two-step approach. 

In the first step, by means of a local analysis of critical points one first reduces to the case $2 \leq p \leq 4$, leveraging certain algebraic identities enjoyed by exponential random variables. This is an inductive argument on $k = 1, 2, \dots$, where the veracity of \eqref{eq:main-p>2} for all $p \in [2k, 2k+2]$ is reduced to the base case $k=1$. This idea is also very much reminiscent of an inductive argument employed by Haagerup for Rademacher sums (see the final paragraph of \S 5 in \cite{Haa}), and, to some extent, is in the same spirit as an ``interpolation-lowering $p$'' trick developed and employed lately in Section 2 of \cite{BMNO}.

The second step deals with the range $2 \leq p \leq 4$, where we employ Fourier analytic formulae which have been widely explored in many similar contexts (notably, pioneered in Haagerup's work \cite{Haa}), but for \emph{upper bounds}. An interesting novel point of our argument is turning our foe, the lower bound on $L_p$--norm, into our ally, an upper bound on $L_{p-2}$--norm, the reduction made possible via a local analysis of extremisers, combined with the said algebraic identities, specific to the exponential distribution.

\subsection{Auxiliary lemmas}

Throughout this section, suppose $\Phi\colon \R\to \R$ is $C^1$ with $\Phi$ and $\Phi'$ growing moderately, say $|\Phi(t)|, |\Phi'(t)| = O(|t|^\beta)$ for some $\beta > 0$. Given a vector $x = (x_1, \dots, x_n)$ in $\R^n$, we form the sum
\[
S_x = \sum_{j=1}^n x_j\cE_j.
\]
We begin with a simple integration-by-parts-type formula for the exponential distribution.

\begin{lemma}\label{lm:by-parts}
For every real numbers $x_1, \dots, x_n$ and $u$, we have
\[ 
\E\Phi(S_x) = \E\Phi(S_x  + u\cE) - u\E\Phi'(S_x+u\cE).
 \]
\end{lemma}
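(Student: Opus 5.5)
Since $\cE$ is independent of $S_x$, I will condition on $S_x$ and reduce the lemma to a one-dimensional identity: for every real $s$,
\[
\Phi(s) = \E\Phi(s+u\cE) - u\E\Phi'(s+u\cE).
\]
Taking expectation over $s = S_x$ then gives the claim by Fubini. This step needs integrability of $\Phi(S_x+u\cE)$ and $\Phi'(S_x+u\cE)$, which follows from the growth assumption $|\Phi(t)|,|\Phi'(t)| = O(|t|^\beta)$, since sums of exponentials have all moments finite. Strictly speaking, the $O(|t|^\beta)$ bound should be read as $O(1+|t|^\beta)$ near $0$; this is harmless because $\Phi$ and $\Phi'$ are continuous.

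The one-dimensional identity is trivial when $u=0$. For $u \neq 0$, I will write
\[
\E\Phi'(s+u\cE) = \int_0^\infty \Phi'(s+ut)e^{-t}\,\dd t
\]
and integrate by parts, using $\frac{\dd}{\dd t}\Phi(s+ut) = u\Phi'(s+ut)$:
\[
u\int_0^\infty \Phi'(s+ut)e^{-t}\,\dd t = \Big[\Phi(s+ut)e^{-t}\Big]_0^\infty + \int_0^\infty \Phi(s+ut)e^{-t}\,\dd t = -\Phi(s) + \E\Phi(s+u\cE).
\]
The boundary term at infinity vanishes because $\Phi$ grows at most polynomially while $e^{-t}$ decays exponentially. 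Rearranging gives exactly the identity. The computation is the same whether $u$ is positive or negative, since only the parametrisation $t \mapsto s+ut$ is used.

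Everything here is routine. The only point needing care, and the place I would be most attentive, is justifying the interchange of expectations and the vanishing boundary term, which is exactly what the moderate-growth hypothesis on $\Phi$ and $\Phi'$ is for. An alternative check is through the moment generating function in the case $\Phi(t)=e^{\lambda t}$: then $\E e^{\lambda u\cE}(1-\lambda u) = 1$ for $\lambda u<1$. The direct integration by parts avoids any density or approximation argument, so that is the route I would write up.
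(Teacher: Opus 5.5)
Your proposal is correct and is essentially the paper's own proof: condition on $S_x$, write $\E\Phi(S_x+u\cE)$ as $\int_0^\infty \Phi(S_x+ut)e^{-t}\,\dd t$, and integrate by parts, with the moderate growth of $\Phi$ killing the boundary term at infinity. Your remarks on the Fubini step and on reading the growth hypothesis as $O(1+|t|^\beta)$ only make explicit what the paper leaves implicit.
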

\begin{proof}
Integration by parts indeed yields
\begin{align*}
\E\Phi(S_x + u\cE) &= \E\left[\int_0^\infty \Phi(S_x + ut)e^{-t} \dd t \right]\\
&= \E\left[-\Phi(S_x + ut)e^{-t}\Big|_{t=0}^\infty + \int_0^\infty u\Phi'(S_x + ut)e^{-t} \dd t\right] \\
&=\E\left[\Phi(S_x) + u\Phi'(S_x + u\cE)\right].
\end{align*}
Rearranging gives the result.
\end{proof}

We can now derive the aforementioned algebraic identities, quintessential for differentiating functionals of the form $\E \Phi(S_x)$, later used in the critical point analysis.

\begin{lemma}\label{lm:diff}
For every real numbers $x_1, \dots, x_n$ and $u, v$, $u \neq v$, we have
\[ 
\frac{\E\Phi(S_x + v\cE) - \E\Phi(S_x+u\cE)}{v-u} = \E\Phi'(S_x+u\cE + v\cE').
 \]
In particular,
\[ 
\frac{\partial}{\partial u} \E \Phi(S_x+u\cE) = \E\Phi'(S_x+u\cE + u\cE').
 \]
\end{lemma}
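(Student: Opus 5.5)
The plan is to apply Lemma \ref{lm:by-parts} with the base sum replaced by $S_x + v\cE'$ and with the parameter $u$ replaced by $u - v$. We then compare what comes out with the same lemma applied with the roles of $u$ and $v$ exchanged. The key observation is that $S_x + u\cE + v\cE'$ is symmetric in $(u,v)$ in distribution, because $\cE$ and $\cE'$ are i.i.d. and independent of $S_x$.

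First we set up the two expansions. Apply Lemma \ref{lm:by-parts} to the vector $(x_1,\dots,x_n,v)$, so the base sum is $S_x + v\cE'$, with parameter $u$ and the fresh exponential $\cE$:
\[
\E\Phi(S_x+v\cE') = \E\Phi(S_x+v\cE'+u\cE) - u\,\E\Phi'(S_x+v\cE'+u\cE).
\]
Symmetrically, with base $S_x+u\cE$ and parameter $v$:
\[
\E\Phi(S_x+u\cE) = \E\Phi(S_x+u\cE+v\cE') - v\,\E\Phi'(S_x+u\cE+v\cE').
\]
The random variables $S_x+v\cE'+u\cE$ and $S_x+u\cE+v\cE'$ are literally the same, so we subtract the first identity from the second. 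This gives
\[
\E\Phi(S_x+u\cE)-\E\Phi(S_x+v\cE') = (u-v)\,\E\Phi'(S_x+u\cE+v\cE').
\]
Since $\cE'$ has the same law as $\cE$, the left side equals $\E\Phi(S_x+u\cE)-\E\Phi(S_x+v\cE)$. Dividing by $u-v$ yields the claimed difference-quotient identity.

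For the derivative statement, we let $v\to u$. The right side $\E\Phi'(S_x+u\cE+v\cE')$ is continuous in $v$ by dominated convergence. This uses continuity of $\Phi'$ together with the polynomial growth bound $|\Phi'(t)|=O(|t|^\beta)$, since exponentials have all moments and $v$ ranges over a bounded set. Hence the difference quotient converges, and the derivative exists and equals $\E\Phi'(S_x+u\cE+u\cE')$.

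The only real care needed is justifying Lemma \ref{lm:by-parts} in this generality. This is not an obstacle, since that lemma is already proved and the growth assumption on $\Phi$ makes the boundary terms vanish and all expectations finite. The case $u=0$ or $v=0$ is trivially consistent with the lemma. So the main step is really just spotting the symmetric application of the integration by parts.
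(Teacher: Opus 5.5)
Your proof is correct and is essentially the paper's. Both arguments apply Lemma \ref{lm:by-parts} twice, to the bases $S_x+u\cE$ and $S_x+v\cE$, subtract, and let $v\to u$. The paper uses the symmetry $S_x+v\cE+u\cE'\overset{d}{=}S_x+u\cE+v\cE'$. You instead attach $v$ to $\cE'$ both times, so the two random variables coincide literally, and then use $\cE'\overset{d}{=}\cE$ on the left-hand side; you also justify the limit $v\to u$ more carefully via dominated convergence. One small slip: your opening sentence speaks of replacing the parameter by $u-v$, but the computation you actually carry out uses parameter $u$ and is fine.
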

\begin{proof}
Applying Lemma \ref{lm:by-parts} to the sum $S_{(x, u)} = S_x + u\cE$ (with $n+1$ terms) gives,
\[ 
\E\Phi(S_x+u\cE) = \E\Phi(S_{(x,u)}) = \E\Phi(S_x + u\cE + v\cE') - v\E\Phi'(S_x + u\cE + v\cE').
 \]
Similarly,
\[ 
\E\Phi(S_x+v\cE) = \E\Phi(S_{(x,v)}) = \E\Phi(S_x + v\cE + u\cE') - u\E\Phi'(S_x + v\cE + u\cE').
 \]
Subtracting off from this the previous line yields
\[ 
\frac{\E\Phi(S_x + v\cE) - \E\Phi(S_x+u\cE)}{v-u} = \E\Phi'(S_x+u\cE + v\cE'),
 \]
as desired. Letting $v \to u$ readily gives
\[ 
\frac{\partial}{\partial u} \E \Phi(S_x+u\cE) = \E\Phi'(S_x+u\cE + u\cE').\qedhere
 \]
\end{proof}

\begin{remark}\label{rem:lm-diff-gen}
Note that in the proofs of both Lemmas \ref{lm:by-parts} and \ref{lm:diff}, we have only used the fact the $\cE$ and $\cE'$ have the exponential distribution, and that the $S_x$ is an independent of them random variable, of in fact \emph{arbitrary} distribution (provided the integrability). 
\end{remark}

The next lemma evaluates an elementary integral arising in certain bounds in the Fourier analytical part of the main argument.

\begin{lemma}\label{lm:integral} For $s>0$ and $0<q<2$, we have
\[
\int_{0}^{\infty}\frac{1-\bigl(1+t^{2}/s\bigr)^{-\frac{1+s}{2}}}{t^{q+1}}\,\dd t =\frac{1}{q}\,\Gamma\!\Bigl(1-\frac q2\Bigr)\,
		\frac{\Gamma\!\Bigl(\frac{1+q+s}{2}\Bigr)}{s^{q/2}\,\Gamma\!\Bigl(\frac{1+s}{2}\Bigr)}.
\]
\end{lemma}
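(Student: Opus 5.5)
Write $a=\frac{1+s}{2}>0$, so the integrand is $\bigl(1-(1+t^2/s)^{-a}\bigr)t^{-q-1}$. First I check convergence. Near $t=0$ the numerator behaves like $a t^2/s$, so the integrand is $O(t^{1-q})$, which is integrable because $q<2$. At infinity the numerator is bounded, so the integrand is $O(t^{-q-1})$, integrable because $q>0$.

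The substitution $u=t^2/s$ gives $t=\sqrt{su}$ and $\dd t=\frac{\sqrt s}{2\sqrt u}\dd u$. Then $t^{-q-1}\dd t=\frac12 s^{-q/2}u^{-q/2-1}\dd u$, and the integral becomes
\[
\frac{1}{2}s^{-q/2}\int_0^\infty \bigl(1-(1+u)^{-a}\bigr)u^{-q/2-1}\,\dd u .
\]

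Next I integrate by parts, differentiating $1-(1+u)^{-a}$ and integrating $u^{-q/2-1}$ to $-\frac{2}{q}u^{-q/2}$. The boundary terms vanish: at $0$ the product is $O(u^{1-q/2})\to0$, and at $\infty$ it is $O(u^{-q/2})\to0$. This leaves
\[
\frac{2}{q}\int_0^\infty a(1+u)^{-a-1}u^{-q/2}\,\dd u=\frac{2a}{q}\,B\!\Bigl(1-\frac q2,\;a+\frac q2\Bigr).
\]
Here I used the Beta integral $\int_0^\infty u^{\alpha-1}(1+u)^{-\alpha-\beta}\dd u=B(\alpha,\beta)$ with $\alpha=1-q/2>0$ and $\beta=a+q/2>0$.

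Finally, $B(\alpha,\beta)=\frac{\Gamma(1-q/2)\,\Gamma(a+q/2)}{\Gamma(a+1)}$, and $a\,\Gamma(a)=\Gamma(a+1)$, so $\frac{a}{\Gamma(a+1)}=\frac{1}{\Gamma(a)}$. Collecting factors gives
\[
\frac12 s^{-q/2}\cdot\frac2q\cdot\frac{\Gamma(1-\frac q2)\,\Gamma(\frac{1+s+q}{2})}{\Gamma(\frac{1+s}{2})},
\]
which is the claimed formula. There is no real obstacle beyond justifying the boundary terms in the integration by parts. Alternatively, one can write $1-(1+u)^{-a}=\int_0^u a(1+v)^{-a-1}\dd v$ and use Fubini to reach the same Beta integral.
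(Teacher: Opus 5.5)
Your proof is correct, and it takes a different route from the paper's after the shared substitution $u=t^2/s$.

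The paper does not integrate by parts in $u$. It writes $(1+u)^{-\alpha}=\Gamma(\alpha)^{-1}\int_0^\infty x^{\alpha-1}e^{-x}e^{-ux}\,\dd x$ as a Gamma mixture of exponentials and exchanges the order of integration, which is legitimate since everything is nonnegative. It then evaluates the inner integral $\int_0^\infty(1-e^{-ux})u^{-1-\beta}\,\dd u=x^{\beta}\,\Gamma(1-\beta)/\beta$ by scaling and one integration by parts. Finally it integrates $x^{\alpha+\beta-1}e^{-x}$ to get $\Gamma(\alpha+\beta)$.

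Your version is shorter. A single integration by parts, with the boundary terms correctly controlled using $0<q<2$, lands directly on the standard Beta integral $B(1-\tfrac q2,\,a+\tfrac q2)$. No interchange of integrals is needed, and $a\,\Gamma(a)=\Gamma(a+1)$ finishes the computation.

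The paper's route instead works only with Gamma integrals. It also exhibits the answer in a structurally meaningful form: $\frac{\Gamma(1-\beta)}{\beta}\,\E X^{\beta}$ with $X\sim\Gamma(\alpha)$. This reflects the fact that $(1+t^2/s)^{-\alpha}$ is the characteristic function of the Gaussian mixture $\sqrt{2X/s}\,G$. That makes the later appearance of $\Psi_{q/2}(s/2)$ and its limit $\E|G|^q$ in the main proof rather transparent. The paper's route is also in the spirit of the exponential-subordination techniques used in Section~\ref{sec:proof-Schur}.
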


\begin{proof}
Denote the integral on the left hand side by $I_{q,s}$.
Set $\alpha=\frac{1+s}{2}$ and $\beta=\frac q2$.
With the change of variables $u=t^{2}/s$, we have 
\[
		I_{q,s}
		=\frac12 s^{-q/2}\int_{0}^{\infty}\bigl[1-(1+u)^{-\alpha}\bigr]u^{-1-\beta}\,\dd u.
\]
Using the Gamma representation
\[
		(1+u)^{-\alpha}=\frac{1}{\Gamma(\alpha)}\int_{0}^{\infty}x^{\alpha-1}e^{-x}e^{-ux}\,\dd x,
\]
we obtain
\[
		1-(1+u)^{-\alpha}
		=\frac{1}{\Gamma(\alpha)}\int_{0}^{\infty}x^{\alpha-1}e^{-x}\bigl(1-e^{-ux}\bigr)\,\dd x.
\]
Plugging this back and exchanging the order of integration,
\[
		\int_{0}^{\infty}\bigl[1-(1+u)^{-\alpha}\bigr]u^{-1-\beta}\,\dd u
		=\frac{1}{\Gamma(\alpha)}\int_{0}^{\infty}x^{\alpha-1}e^{-x}
		\left(\int_{0}^{\infty}(1-e^{-ux})u^{-1-\beta}\,du\right)\,\dd x.
\]
For $0<\beta<1$, changing the variables and integrating by parts,
\[
		\int_{0}^{\infty}(1-e^{-ux})u^{-1-\beta}\,\dd u = x^\beta\int_0^\infty (1-e^{-u})u^{-1-\beta}\dd u 
		=x^{\beta}\frac{\Gamma(1-\beta)}{\beta}
\]
hence
\[
		\int_{0}^{\infty}\bigl[1-(1+u)^{-\alpha}\bigr]u^{-1-\beta}\,du
		=\frac{\Gamma(1-\beta)}{\beta}\cdot \frac{1}{\Gamma(\alpha)}
		\int_{0}^{\infty}x^{\alpha+\beta-1}e^{-x}\,dx
		=\frac{\Gamma(1-\beta)}{\beta}\cdot\frac{\Gamma(\alpha+\beta)}{\Gamma(\alpha)}.
\]
Therefore,
\[
		I_{q,s}
		=\frac12 s^{-q/2}\cdot \frac{\Gamma(1-\beta)}{\beta}\cdot\frac{\Gamma(\alpha+\beta)}{\Gamma(\alpha)}
		=\frac{1}{q}\,\Gamma\!\Bigl(1-\frac q2\Bigr)\,
		\frac{\Gamma\!\Bigl(\frac{1+q+s}{2}\Bigr)}{s^{q/2}\,\Gamma\!\Bigl(\frac{1+s}{2}\Bigr)}.\qedhere
\]
\end{proof}

We shall need a technical result on the special function from the previous lemma. In fact, this is the backbone of our future bounds. Its proof is inspired by Haagerup's Lemma 1.4 in \cite{Haa}.

\begin{lemma}\label{lm:special-fun-monot}
For $\beta, x > 0$, let
\[ 
\Psi_\beta(x) = \frac{\Gamma\left(x + \beta + \frac12\right)}{x^\beta\Gamma\left(x+\frac12\right)}.
 \]
Then, for every $\beta > 0$, 
\[
(0, +\infty) \ni x \mapsto \Psi_\beta(x) \text{ is strictly decreasing and }\Psi_\beta(x) \to 1 \text{ as } x \to \infty.
\]
\end{lemma}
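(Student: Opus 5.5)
Throughout, write $L(x) = \log\Psi_\beta(x) = \log\Gamma(x+\beta+\tfrac12) - \log\Gamma(x+\tfrac12) - \beta\log x$. The plan is to settle the limit first and then get monotonicity from a sign argument on $L'$ via the digamma function $\psi = (\log\Gamma)'$, following Haagerup's Lemma 1.4 in \cite{Haa}.

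The limit is immediate from Stirling's formula, in the form $\Gamma(x+a)/\Gamma(x+b) = x^{a-b}(1+O(1/x))$ as $x\to\infty$. With $a=\beta+\frac12$, $b=\frac12$ this gives $\Psi_\beta(x)\to1$.

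For monotonicity, we compute
\[
L'(x) = \psi\bigl(x+\beta+\tfrac12\bigr) - \psi\bigl(x+\tfrac12\bigr) - \frac{\beta}{x},
\]
and we need $L'(x)<0$ for all $x,\beta>0$. We use the integral representation
\[
\psi(z) = \int_0^\infty \Bigl(\frac{e^{-t}}{t} - \frac{e^{-zt}}{1-e^{-t}}\Bigr)\dd t ,
\]
together with $\beta/x = \int_0^\infty \beta e^{-xt}\,\dd t$. Combining these gives
\[
L'(x) = \int_0^\infty e^{-xt}\Bigl( \frac{e^{-t/2}\bigl(1-e^{-\beta t}\bigr)}{1-e^{-t}} - \beta\Bigr)\dd t .
\]
Writing $e^{-t/2}/(1-e^{-t}) = 1/(2\sinh(t/2))$, it suffices to show
\[
g(t) := \frac{1-e^{-\beta t}}{2\sinh(t/2)} < \beta \qquad \text{for all } t>0 .
\]
The key ingredients are the inequality $1-e^{-\beta t} \le \beta t$ and the inequality $2\sinh(t/2) \ge t$, the latter strict for $t>0$. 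Together they give $g(t) < \beta t/t = \beta$. Hence the integrand is strictly negative, so $L'(x)<0$, and $\Psi_\beta$ is strictly decreasing.

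The main obstacle is correctly assembling the integral representation of $\psi(z+\beta)-\psi(z)$, namely $\int_0^\infty e^{-zt}(1-e^{-\beta t})/(1-e^{-t})\,\dd t$, and justifying the interchange of integrals; this is standard since all integrands are nonnegative or absolutely integrable for $x>0$. The remaining inequalities are elementary.

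As an alternative route, if one prefers to avoid digamma, note that since $\Psi_\beta\to1$, strict decrease is equivalent to $\Psi_\beta>1$ combined with log-convexity considerations. The direct sign computation above is cleaner, so I would use it.
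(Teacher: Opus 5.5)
Your proof is correct, but it takes a different route from the paper. You use Gauss's integral representation of the digamma function to get
$L'(x)=\int_0^\infty e^{-xt}\bigl(\frac{1-e^{-\beta t}}{2\sinh(t/2)}-\beta\bigr)\,\dd t$.
The pointwise bound $1-e^{-\beta t}<\beta t<2\beta\sinh(t/2)$ then makes the integrand negative.

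The paper uses only $\Gamma(z+1)=z\Gamma(z)$. It writes $\Psi_\beta(x)=R_\beta(x)\Psi_\beta(x+1)$ with $R_\beta(x)=(1+\frac1x)^\beta\frac{x+1/2}{x+\beta+1/2}$, iterates, and invokes the Stirling limit $\Psi_\beta(x+n)\to1$ to obtain $\Psi_\beta(x)=\prod_{k\ge0}R_\beta(x+k)$. Each factor is strictly decreasing because $(\log R_\beta)'(x)=-\frac{\beta}{x(x+1)}+\frac{\beta}{(x+1/2)(x+\beta+1/2)}<0$, which holds since $(x+\frac12)(x+\beta+\frac12)>(x+\frac12)^2>x(x+1)$.

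The two arguments split $L'$ in the same way. Expanding $1/(1-e^{-t})$ as a geometric series in your integral, together with $\frac{\beta}{x}=\sum_{k\ge0}\frac{\beta}{(x+k)(x+k+1)}$, reproduces $L'(x)=\sum_{k\ge0}(\log R_\beta)'(x+k)$. The difference is that the paper bounds term by term in $k$, while you bound pointwise in the Laplace variable $t$.

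What each buys:
\begin{itemize}
\item The paper's version is fully self-contained (the recursion plus Stirling), but it needs the limit as an ingredient of the monotonicity proof.
\item Yours relies on the standard but less elementary integral formula for $\psi$. It proves monotonicity independently of the limit and gives more: $-L'$ is the Laplace transform of a positive function, hence completely monotone, so $\Psi_\beta$ is also log-convex.
\end{itemize}

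Your closing aside is therefore unnecessary. As phrased it is also imprecise: log-convexity together with $\Psi_\beta\to1$ already forces monotonicity, and $\Psi_\beta>1$ is a consequence, not an ingredient. It plays no role in your proof.
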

\begin{proof}
Using Stirling's formula, $\Gamma(x) \sim \sqrt{2\pi}x^{x-1/2}e^{-x}$, as $x \to \infty$, we have
\[ 
\Psi_\beta(x) \sim \frac{\left(x+\beta+\frac12\right)^{x+\beta}}{x^\beta\left(x+\frac12\right)^x}e^{-\beta} = \left(1 + \frac{\beta+\frac12}{x}\right)^\beta\left(1 + \frac{\beta}{x+\frac12}\right)^xe^{-\beta} \sim 1.
 \]
Fix $x > 0$. Note that
\[ 
\Psi_\beta(x+1) = \frac{\left(x+\beta+\frac12\right)\Gamma\left(x + \beta + \frac12\right)}{(x+1)^\beta\left(x+\frac12\right)\Gamma\left(x+\frac12\right)} = \left(\frac{x}{x+1}\right)^\beta\frac{x+\beta+\frac12}{x+\frac12}\cdot \Psi_\beta(x).
 \]
Rewriting with
\[ 
R_\beta(x) = \left(1+\frac1x\right)^\beta\frac{x+\frac12}{x+\beta+\frac12},
 \]
and iterating  $n$-times yields
\begin{align*}
\Psi_\beta(x) &= \Psi_\beta(x+1)R_\beta(x) = \Psi_\beta(x+2)R_\beta(x)R_\beta(x+1) \\
&= \dots =\Psi_\beta(x+n+1) \prod_{k=0}^n R_\beta(x+k).
 \end{align*}
Thus letting $n \to \infty$, we find that
\[
\Psi_\beta(x) = \prod_{k=0}^\infty R_\beta(x+k).
\]
The following claim finishes the argument.

\textbf{Claim.} For every $\beta > 0$, the function $x \mapsto R_\beta(x)$ is strictly decreasing on $(0,+\infty)$.

Indeed,
\begin{align*}
\frac{\dd}{\dd x}\log R_\beta(x) &= \beta\frac{1}{1+1/x}\frac{-1}{x^2} + \frac{1}{x+\frac12} - \frac{1}{x+\beta + \frac12} \\
&= -\frac{\beta}{x(x+1)} + \frac{\beta}{\left(x+\frac12\right)\left(x+\beta+\frac12\right)} \\
&< -\frac{\beta}{x(x+1)} + \frac{\beta}{\left(x+\frac12\right)^2} < 0.\qquad\qquad\qedhere
 \end{align*}
\end{proof}


Finally, we need to check directly that Theorem \ref{thm:p>2} holds in the special case when all coefficients $x_j$ are equal. In fact, there is a stronger bound.

\begin{lemma}\label{lm:all-equal}
For every $n = 1, 2, \dots$, and $p \geq 2$, we have
\[ 
\E\left|\frac{\cE_1 + \dots + \cE_n}{\sqrt{n}}\right|^p \geq 2^{p/2}\E|G|^p, \qquad G \sim N(0,1).
 \]
\end{lemma}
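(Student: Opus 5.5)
The plan is to make both sides of the inequality explicit in Gamma functions and then compare them. The statement is phrased with the one-sided variables $\cE_j$ of the paper's standing notation, so $S=\cE_1+\dots+\cE_n\sim\mathrm{Gamma}(n,1)$ is nonnegative and the absolute value is harmless. Its moments are
\[
\E S^p=\frac{\Gamma(n+p)}{\Gamma(n)},
\]
so the left-hand side is $f_p(n)=\Gamma(n+p)/\bigl(\Gamma(n)n^{p/2}\bigr)$. On the right-hand side we have $\E|G|^p=2^{p/2}\Gamma(\frac{p+1}{2})/\sqrt\pi$. The Legendre duplication formula turns this into
\[
2^{p/2}\E|G|^p=\frac{\Gamma(p+1)}{\Gamma(\frac p2+1)}.
\]
The constant $2^{p/2}\E|G|^p=\E|\sqrt2G|^p$ is also the limit of the normalised symmetric sums $n^{-1/2}\sum_{j\le n}(\cE_j-\cE_j')$, whose summands are Laplace with variance $2$. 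So in the application I expect this lemma to be used to show that the equal-coefficient configurations are not the extremisers: their $L_p$ norm stays above that of the balanced vector $x_\star$ of Remark \ref{rem:conj-fixed-n}. The task is thus the purely numerical inequality
\[
\frac{\Gamma(n+p)}{\Gamma(n)n^{p/2}}\ \ge\ \frac{\Gamma(p+1)}{\Gamma(\frac p2+1)},\qquad n\ge1,\ p\ge2.
\]

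\textbf{Base case and monotonicity in $n$.} At $n=1$ the left side is $\Gamma(p+1)$, and $\Gamma(\frac p2+1)\ge1$ for $p\ge2$, so the inequality holds. I would then aim to show that $n\mapsto f_p(n)$ is nondecreasing on the integers. The ratio is
\[
\frac{f_p(n+1)}{f_p(n)}=\Bigl(1+\frac pn\Bigr)\Bigl(1+\frac1n\Bigr)^{-p/2}.
\]
Set $g_n(p)=\log(1+p/n)-\frac p2\log(1+\frac1n)$. Then $g_n(0)=0$ and $g_n(2)=\log\frac{n+2}{n+1}>0$, and $g_n$ is concave in $p$. Hence $g_n\ge0$ on $[0,2]$, and in fact up to the second zero of $g_n$.

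\textbf{Splitting $n$ against $p$.} For large $p$ relative to $n$ the monotonicity may fail, so I would split instead of insisting on it. For $n\ge n_0(p)$, Jensen's inequality gives $\E S^p\ge(\E S)^p=n^p$, so $f_p(n)\ge n^{p/2}$. This already exceeds $\Gamma(p+1)/\Gamma(\frac p2+1)\le(2p)^{p/2}$ as soon as $n\ge 2p$, roughly. For $1\le n<2p$, I would use the continuous variable $n$ and the derivative
\[
\frac{\dd}{\dd n}\log f_p(n)=\psi(n+p)-\psi(n)-\frac p{2n}=\sum_{k\ge0}\Bigl(\frac1{n+k}-\frac1{n+p+k}\Bigr)-\frac p{2n}.
\]
The aim is to show either that this derivative is nonnegative in that range, or that $f_p(n)\ge f_p(1)$ there directly. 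One route to the latter is the lower bound $\Gamma(n+p)/\Gamma(n)\ge\Gamma(1+p)\,n^{p-1}$ (log-convexity of $\Gamma$ along the segment from $1$ to $n$), which gives $f_p(n)\ge\Gamma(p+1)n^{p/2-1}\ge\Gamma(p+1)$.

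\textbf{Main obstacle.} This last estimate already settles every $n\ge1$ once the log-convexity bound is verified, so the crux is that inequality: $\log\Gamma(n+p)-\log\Gamma(n)\ge\log\Gamma(1+p)+(p-1)\log n$. To prove it, I would write the left side as $\int_0^p\psi(n+t)\,\dd t$ and compare it with $\int_0^p\psi(1+t)\,\dd t$. The difference is $\int_0^p[\psi(n+t)-\psi(1+t)]\,\dd t$. Since $\psi$ is concave, $\psi(n+t)-\psi(1+t)$ is decreasing in $t$, so the integral is at least $p$ times its value at $t=p$, namely $p[\psi(n+p)-\psi(1+p)]$. The delicate point is then the comparison $p[\psi(n+p)-\psi(1+p)]\ge(p-1)\log n$. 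I would check it using $\psi(x)\in(\log x-\frac1x,\log x-\frac1{2x})$. If it fails in some small corner of $(n,p)$, I would fall back on the Jensen bound together with the explicit ratio computation from the monotonicity step.
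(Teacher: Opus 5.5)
Your reduction is correct: you rewrite the claim as $\Gamma(n+p)/(\Gamma(n)n^{p/2})\ge\Gamma(p+1)/\Gamma(\frac p2+1)$ via Legendre duplication, and the case $n=1$ checks out. The gap is the step you call the crux, $\Gamma(n+p)/\Gamma(n)\ge\Gamma(1+p)\,n^{p-1}$, which is false. At $n=2$, $p=4$ the left side is $\Gamma(6)=120$ and the right side is $\Gamma(5)\cdot 2^3=192$. Log-convexity of $\Gamma$ only gives $\Gamma(n+p)/\Gamma(n)\ge\Gamma(1+p)$, without the factor $n^{p-1}$. Your digamma comparison fails with it; at $(2,4)$ it reads $0.8\ge 3\log 2$. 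Even the weaker aim $f_p(n)\ge f_p(1)=\Gamma(p+1)$ is false, since $f_p(2)/f_p(1)=(1+p)2^{-p/2}$, which equals $7/8$ at $p=6$ and tends to $0$.

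This is not a small corner. For each fixed $n\ge2$ the bound fails for all large $p$, because $\Gamma(n+p)/\Gamma(1+p)$ grows only polynomially in $p$. The fallback does not close the gap either. For large $p$, your ratio formula shows $f_p(n)$ decreases in $n$ up to about $n\approx0.4p$ and increases afterwards. At that minimiser, Jensen's bound $n^{p/2}$ is exponentially smaller than $\Gamma(p+1)/\Gamma(\frac p2+1)\approx\sqrt2\,(2p/e)^{p/2}$. The ratio computation gives monotonicity on either side but no value at the minimum. The root problem is that you aim for $\Gamma(p+1)$, which is far larger than the actual target $\Gamma(p+1)/\Gamma(\frac p2+1)$.

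The missing idea is to split the exponent in half rather than as $1+(p-1)$:
\[
\frac{\Gamma(n+p)}{\Gamma(n)\,n^{p/2}}=\frac{\Gamma(n+p)}{\Gamma(n+\frac p2)}\cdot\frac{\Gamma(n+\frac p2)}{\Gamma(n)\,n^{p/2}} .
\]
The first factor is at least $\Gamma(1+p)/\Gamma(1+\frac p2)$. Convexity of $\log\Gamma$ makes $x\mapsto\log\Gamma(x+\frac p2)-\log\Gamma(x)$ nondecreasing; evaluate it at $x=n+\frac p2\ge 1+\frac p2$. The second factor equals $\E S^{p/2}/(\E S)^{p/2}\ge1$ by Jensen, since $p/2\ge1$.

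This is exactly the paper's proof written with Gamma functions. The paper uses $\cE_j\overset{d}{=}\sqrt{2\cE_j}\,|G_j|$, the Gaussian-mixture form of the Laplace law, and drops the absolute values by the triangle inequality. That yields $\E S^p\ge 2^{p/2}\E|G|^p\,\E S^{p/2}$, which is precisely the bound on the first factor, followed by the same Jensen step.
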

\begin{proof}
It is an elementary fact that the symmetric exponential distribution is a Gaussian mixture. Specifically, we have the following identity for distributions,
\[ 
\cE - \cE' \ \overset{d}{=} \sqrt{2\cE}G,
 \]
where $G$ is a standard Gaussian random variable independent of the exponential random variable $\cE$. This identity can perhaps be most conveniently checked by comparing the characteristic functions, see also Lemma 23 in \cite{ENT1} and Remark (i) following it. Therefore,
\[ 
\cE_j \overset{d}{=} |\cE_j - \cE_j'| \overset{d}{=}  \sqrt{2\cE_j}|G_j|,
 \]
where $G_1, G_2, \dots$ are i.i.d. standard Gaussian random variables, independent of the $\cE_j$. Thus,
\[ 
\E\left|\sum_{j=1}^n \cE_j\right|^p = 2^{p/2}\E\left|\sum_{j=1}^n \sqrt{\cE_j}|G_j|\right|^p.
 \]
Note that a crude application of the triangle inequality gives: $\sum \sqrt{\cE_j}|G_j| \geq \Big|\sum \sqrt{\cE_j}G_j\Big|$, a.s. Using independence, the sum $\sum \sqrt{\cE_j}G_j$ has the same distribution as $\sqrt{\sum \cE_j}G_1$. As a result,
\[ 
\E\left|\sum_{j=1}^n \cE_j\right|^p \geq 2^{p/2}\E\Bigg(\sum_{j=1}^n \cE_j\Bigg)^{p/2}\cdot \E|G|^p.
 \]
For $p \geq 2$, by convexity, 
\[
\E\Bigg(\sum_{j=1}^n \cE_j\Bigg)^{p/2} \geq \Bigg(\E\sum_{j=1}^n \cE_j\Bigg)^{p/2} = n^{p/2}.\qedhere
\]
\end{proof}

\subsection{Proof of Theorem \ref{thm:p>2}}

The whole proof runs by  induction on $k = 1, 2, \dots$ for the statement that \eqref{eq:main-p>2} holds for \emph{all} $p \in [2k, 2k+2]$ with arbitrary sum $X = \sum_{j=1}^n x_j\cE_j$, $n \geq 1$. The base case $k = 1$ will be handled in Step II. 

\emph{Step I: A reduction to $2 \leq p \leq 4$ via Hunter's local argument.} 
For the inductive step: suppose \eqref{eq:main-p>2} holds for all $p \in [2k-2, 2k]$ for some $k \geq 2$ and fix $p \in [2k, 2k+2]$, $n \geq 1$. We consider critical points of
\[ 
(x_1, \dots, x_n) \mapsto \E\Phi(S_x), \qquad \text{subject to} \qquad \sum_{j=1}^n x_j^2 = 1,
 \]
where $\Phi(x) = |x|^p$. Let $x$ be such a critical point, and to finish the inductive step, it suffices to show that  \eqref{eq:main-p>2} holds with $X = S_x$. Using criticality, for each $1 \leq j \leq n$, we have
\[ 
\frac{\partial}{\partial x_j}\E\Phi(S_x) +2\lambda x_j = 0
 \]
for some Lagrange's multiplier $\lambda \in \R$. Multiplying by $x_j$ and using the $p$-homogeneity of $\Phi$ yields
\[ 
0 = \sum_{j=1}^n x_j\frac{\partial}{\partial x_j}\E\Phi(S_x) + 2\lambda\sum_{j=1}^n x_j^2 = p\E\Phi(S_x) + 2\lambda,
 \]
that is, at a critical point $x$, we have
\[ 
-2\lambda = p\E\Phi(S_x).
 \]
On the other hand, by Lemma \ref{lm:diff},
\[ 
\frac{\partial}{\partial x_j}\E\Phi(S_x) = \E \Phi'(S_x+x_j\cE),
 \]
which combined with Lagrange's equations results in
\[ 
x_jp\E\Phi(S_x) = x_j(-2\lambda) = \frac{\partial}{\partial x_j}\E\Phi(S_x) = \E\Phi'(S_x + x_j\cE).
 \]

If all $x_j$ are equal, we are done by Lemma \ref{lm:all-equal}. Otherwise, say $x_1 \neq x_2$, and we continue with the inductive argument. Using the above identity,
\[
p\E\Phi(S_x) = p\frac{x_1E\Phi(S_x) - x_2\E\Phi(S_x)}{x_1-x_2} 
= \frac{\E\Phi'(S_x + x_1\cE) - \E\Phi'(S_x + x_2\cE)}{x_1-x_2} 
\]
and invoking  Lemma \ref{lm:diff} one more time, we arrive at the crux, which is the identity for the critical point $x$,
\begin{equation}\label{eq:crux}
p\E\Phi(S_x)  = \E\Phi''(S_x + x_1\cE + x_2\cE').
\end{equation}
Plainly, $\Phi'(x) = p|x|^{p-1}\sgn(x)$, $\Phi''(x) = p(p-1)|x|^{p-2}$, so the inductive hypothesis gives
\begin{align*}
\E\Phi''(S_x + x_1\cE + x_2\cE') &\geq p(p-1)\E|G|^{p-2}\cdot (1+x_1^2+x_2^2)^{\frac{p-2}{2}} \\
&> p(p-1)\E|G|^{p-2}.
 \end{align*}
Finally, a straightforward calculation using integration by parts yields
\[ 
(p-1)\E|G|^{p-2} = \E|G|^p,
 \]
thus
\[ 
\E\Phi(S_x) > \E|G|^p.
 \]

\emph{Step II: Proof of \eqref{eq:main-p>2} for $2 \leq p \leq 4$:}
We again dispose of the cases when all $x_j$ are equal by means of Lemma \ref{lm:all-equal}. 

As derived in Step I, at every critical point $x$ of the map $x \mapsto \E|S_x|^p$ subject to $\sum x_j^2 = 1$, we have \eqref{eq:crux}, that is,
\[ 
\E|S_x|^p = (p-1)\E|S_x + x_1\cE_1' + x_2\cE_2'|^{p-2},
 \]
provided that $x_1 \neq x_2$. Without loss of generality, $x_1 = \|x\|_\infty$ (relabelling the components of $x$ and flipping the sign of $x_1$ if needed). Denote 
\[
Y =  S_x + x_1\cE_1' + x_2\cE_2'
\]
and $q = p-2$. We can assume that $0 < q < 2$. Using the standard Fourier-analytic formula (see, e.g. (2f) in \cite{Haa}, or Lemma 4 in \cite{CGT}),
\begin{equation}\label{eq:Fourier}
\E|Y|^q = c_q\int_0^\infty \frac{1 - \mathrm{Re}(\phi_Y(t))}{t^{q+1}} \dd t, \qquad c_q = \frac{2}{\pi}\sin\left(\frac{\pi q}{2}\right)\Gamma(q+1),
 \end{equation}
where $\phi_Y(t) = \E e^{it Y}$ is the characteristic function of the random variable $Y$. Here, thanks to independence,
\[ 
\E e^{it Y} =\E e^{itx_1\cE}\E e^{itx_2 \cE}  \E e^{it S_x}= \frac{1}{1-itx_1}\frac{1}{1-itx_2}\prod_{j=1}^n \frac{1}{1-itx_j}.
 \]
Note that
\[ 
 \mathrm{Re}(\phi_Y(t)) \leq |\phi_Y(t)| = (1+x_1^2t^2)^{-1}(1+x_2^2t^2)^{-1}\prod_{j=3}^n (1+x_j^2t^2)^{-1/2}.
 \]
The function $u \mapsto (1+ut^2)^{1/u}$ is decreasing in $u$ on $(0,+\infty)$ (for every fixed $t$). As a result, $(1+bt^2) \geq (1+at^2)^{b/a}$ for every $0 < b \leq a$, hence
\begin{align*}
\mathrm{Re}(\phi_Y(t)) &\leq (1+x_1^2t^2)^{-1}(1+x_1^2t^2)^{-x_2^2/x_1^2}\prod_{j=3}^n (1+x_1^2t^2)^{-x_j^2/(2x_1^2)} \\
&= (1+x_1^2t^2)^{-\frac{x_1^2+x_2^2+1}{2x_1^2}} \\
&\leq (1+x_1^2t^2)^{-\frac{x_1^2+1}{2x_1^2}}.
 \end{align*}
Let $s = x_1^{-2}$. Plugging this back yields the bound
\begin{align*}
\E|Y|^q &\geq c_q\int_0^\infty \frac{1 - (1+t^2/s)^{-\frac{1+s}{2}}}{t^{q+1}} \dd t \\
&= c_q\frac{1}{q}\Gamma\left(1-\frac{q}{2}\right)\frac{\Gamma\left(\frac{1+q+s}{2}\right)}{s^{q/2}\Gamma\left(\frac{1+s}{2}\right)} = c_q\frac{1}{q}\Gamma\left(1-\frac{q}{2}\right)2^{-q/2}\Psi_{q/2}\left(\frac{s}{2}\right),
 \end{align*}
where the first equality is an elementary calculation involving the Gamma function derived in Lemma \ref{lm:integral}, rewritten in the second equality in terms of special function $\Psi_\cdot(\cdot)$ from Lemma \ref{lm:special-fun-monot}. Moreover, by Lemma \ref{lm:special-fun-monot}, we obtain the lower bound on the right hand side by its limit as $s \to \infty$ resulting with
\[ 
\E|Y|^q \geq c_q\lim_{s\to\infty}\int_0^\infty \frac{1 - (1+t^2/s)^{-\frac{1+s}{2}}}{t^{q+1}} \dd t = c_q\int_0^\infty \frac{1 - e^{-t^2/2}}{t^{q+1}} \dd t = \E|G|^q.
 \]
Consequently,
\[ 
\E|S_x|^p \geq (p-1)\E|G|^{p-2} = \E|G|^p,
 \]
which finishes the proof. \qed

\section{Proofs: Schur-monotonicity of low moments (Theorem \ref{thm:Schur})}\label{sec:proof-Schur}

\subsection{Overview}
The analytic underpinning of our approach comprises integral representations for power functions $x \mapsto x^p$ in terms of mixtures of exponential ones. This naturally exploits the simple fact that a sufficiently high derivative of $x^p$ becomes a power function with a negative exponent, which is completely monotone, and thus enjoys the well-known elementary identity,
\[ 
x^{-q} = \frac{1}{\Gamma(q)}\int_0^\infty t^{q-1}e^{-tx} \dd t, \qquad x, q > 0,
 \]
akin to \eqref{eq:Fourier}, allowing to leverage independence. Similar ideas have been recently implemented for instance in \cite{CST0, LamT}. The obvious restriction of this method is that it only allows to handle nonnegative random variables.

\subsection{Integral representations}

We begin with a point-wise integral formula for power functions.

\begin{lemma}\label{lm:Q_k}
Let $k$ be a nonnegative integer and let $k < p < k+1$. Define
\[
Q_k(t) = (-1)^{k+1}\left(e^{-t} - \sum_{j=0}^k \frac{(-1)^j}{j!}t^j\right), \qquad t > 0.
\]
Then $Q_k(t) > 0$, the integral
\[
C_p = \int_0^\infty Q_k(t)t^{-p-1}\dd t
\]
converges and for $x > 0$, we have
\[
x^p = C_p^{-1}\int_0^\infty Q_k(tx)t^{-p-1}\dd t.
\]
\end{lemma}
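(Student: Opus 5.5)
Three things have to be shown: positivity of $Q_k$, convergence of $C_p$, and the scaling identity. The first is a Taylor-remainder statement and the rest follows quickly from it.

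\emph{Positivity.} By Taylor's theorem with integral remainder applied to $e^{-t}$ at $0$ to order $k$,
\[
e^{-t} - \sum_{j=0}^k \frac{(-1)^j}{j!}t^j = \frac{1}{k!}\int_0^t (t-u)^k (-1)^{k+1}e^{-u}\,\dd u ,
\]
since the $(k+1)$-st derivative of $e^{-u}$ is $(-1)^{k+1}e^{-u}$. Multiplying by $(-1)^{k+1}$ gives
\[
Q_k(t) = \frac{1}{k!}\int_0^t (t-u)^k e^{-u}\,\dd u ,
\]
which is strictly positive for $t>0$. Equivalently, one can induct on $k$ using $Q_k' = Q_{k-1}$, $Q_k(0)=0$ and $Q_{-1}(t)=e^{-t}$.

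\emph{Convergence of $C_p$.} There are two endpoints to control.
\begin{itemize}
\item Near $t=0$, the remainder formula gives $Q_k(t) \le t^{k+1}/(k+1)!$. The integrand is then $O(t^{k-p})$, which is integrable at $0$ because $k-p>-1$, i.e.\ $p<k+1$.
\item Near $t=\infty$, $Q_k(t) = O(t^k)$ since $e^{-t}$ is bounded. The integrand is $O(t^{k-p-1})$, integrable at infinity because $p>k$.
\end{itemize}
Together with positivity, this gives $0<C_p<\infty$. Both strict inequalities $k<p<k+1$ are used here, one at each endpoint.

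\emph{Scaling identity.} Fix $x>0$ and substitute $u=tx$:
\[
\int_0^\infty Q_k(tx)t^{-p-1}\,\dd t = x^{p}\int_0^\infty Q_k(u)u^{-p-1}\,\dd u = x^p C_p .
\]
Dividing by $C_p$ gives the claimed formula.

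I do not expect a genuine obstacle. The only step needing care is the sign bookkeeping in the Taylor remainder, which is what makes $(-1)^{k+1}$ the correct normalisation for positivity.
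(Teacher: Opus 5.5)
Your proof is correct and follows essentially the same route as the paper: a Taylor remainder for $e^{-t}$ gives positivity and the $O(t^{k+1})$ behaviour at $0$, comparing exponents at each endpoint gives convergence of $C_p$ exactly when $k<p<k+1$, and the substitution $u=tx$ gives the identity for $x^p$. The only difference is cosmetic: you use the integral form of the remainder, whereas the paper uses the Lagrange form $Q_k(t)=e^{-\theta}t^{k+1}/(k+1)!$.
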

\begin{proof}
For $t > 0$, by Taylor's formula with Lagrange's remainder,
\[
Q_k(t) = \frac{e^{-\theta}t^{k+1}}{(k+1)!}
\]
for some $0 < \theta < t$, so in particular the left hand side is positive and $O(t^{k+1})$ as $t \to 0$. Therefore, $C_p > 0$ is well-defined, i.e. the integral defining $C_p$ converges (it converges if and only if $k+1 - p - 1 > -1$ and $-p-1+k < -1$, that is $k < p < k+1$). For a fixed $x > 0$, the change of variables $t \mapsto tx$ in the integral defining $C_p$ yields the formula for $x^p$.
\end{proof}

For a nonnegative integer $k$, we let
\[
F_k(x_1, \dots, x_n) = \E Q_k\left(\sum_{j=1}^n \sqrt{x_j}\cE_j\right),
\]
where $Q_k$ is defined in Lemma \ref{lm:Q_k}.
Averaging over the distribution of sums of independent exponentials leads to the desired integral representations for $M_p$, recall \eqref{eq:def-Mp}. 

\begin{lemma}\label{lm:Mp-formula}
Let $k$ be a nonnegative integer and let $k < p < k+1$. Then
\[ 
M_p(x_1, \dots, x_n) = C_p^{-1}\int_0^\infty F_k(t^2x_1, \dots, t^2x_n)t^{-p-1}\dd t.
 \]
\end{lemma}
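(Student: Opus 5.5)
The plan is to apply the pointwise identity of Lemma \ref{lm:Q_k} to the nonnegative random variable $S = \sum_{j=1}^n \sqrt{x_j}\cE_j$, and then take expectations, swapping the expectation with the $t$-integral.

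Fix $x_1,\dots,x_n \ge 0$, not all zero; the case where all $x_j$ vanish is degenerate and can be treated separately. Then $S>0$ almost surely, because it is a nontrivial nonnegative combination of exponentials, each of which has a density. Lemma \ref{lm:Q_k} with $x = S$ gives, almost surely,
\[
S^p = C_p^{-1}\int_0^\infty Q_k(tS)\,t^{-p-1}\,\dd t.
\]
Next I take expectations. By Lemma \ref{lm:Q_k} the integrand $Q_k(tS)t^{-p-1}$ is nonnegative, so Tonelli's theorem allows the exchange without any integrability check:
\[
M_p(x) = \E S^p = C_p^{-1}\int_0^\infty \E Q_k(tS)\,t^{-p-1}\,\dd t.
\]
The left-hand side is finite, since $S$ has moments of all orders, and hence so is the right-hand side.

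It remains to identify $\E Q_k(tS)$. For $t>0$ we have $tS = \sum_j t\sqrt{x_j}\,\cE_j = \sum_j \sqrt{t^2x_j}\,\cE_j$, using $t \ge 0$. By the definition of $F_k$, this gives $\E Q_k(tS) = F_k(t^2x_1,\dots,t^2x_n)$. Substituting into the previous display yields the claimed formula.

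No step here is a real obstacle. The only points needing care are the positivity of $Q_k$, which is what justifies Tonelli, and the fact that $t \ge 0$, which lets $t$ be absorbed into the square roots.
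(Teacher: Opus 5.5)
Your proposal is correct and follows the same route as the paper, which gives no separate proof and only says that averaging the pointwise identity of Lemma \ref{lm:Q_k} over the distribution of $\sum_j \sqrt{x_j}\cE_j$ yields the formula. Your Tonelli justification via $Q_k \geq 0$ and the rewriting $tS = \sum_j \sqrt{t^2x_j}\,\cE_j$ for $t \geq 0$ supply exactly the details the paper leaves implicit. The degenerate case is also immediate, since $Q_k(0) = 0 = 0^p$ for $p > 0$.
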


\subsection{Proof of Theorem \ref{thm:Schur}}

In view of Lemma \ref{lm:Mp-formula}, it suffices to prove the following results.

\begin{lemma}\label{lm:F}
Functions $F_\ell$ are Schur-convex for $\ell = 0, 1, 2, 3$.
\end{lemma}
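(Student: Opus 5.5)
The plan is to verify the Schur--Ostrowski criterion. Each $F_\ell$ is symmetric and $C^1$ on the interior of $\R_+^n$. Since Schur-convexity involves only two coordinates at a time, it suffices to show, for all $x$ with $x_1 \neq x_2$,
\[
(x_1 - x_2)\left(\frac{\partial F_\ell}{\partial x_1} - \frac{\partial F_\ell}{\partial x_2}\right) \geq 0 .
\]
I would work in the variables $y_j = \sqrt{x_j}$. Then $\partial_{x_j} = \frac{1}{2y_j}\partial_{y_j}$, and since $x_1 - x_2$ and $y_1 - y_2$ have the same sign, the condition becomes
\[
(y_1 - y_2)\left(\frac{1}{y_1}\partial_{y_1}F_\ell - \frac{1}{y_2}\partial_{y_2}F_\ell\right) \geq 0 .
\]

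To compute the derivatives I would use Lemma~\ref{lm:diff}, which by Remark~\ref{rem:lm-diff-gen} applies with $S_x$ replaced by any independent remainder. Put $R = \sum_{j\geq 3} y_j\cE_j$, which is independent of everything else and nonnegative. A direct check gives $Q_\ell' = -Q_{\ell-1}$, where $Q_{-1}(t) = e^{-t}$. Hence
\[
\partial_{y_1}F_\ell = -\E Q_{\ell-1}\bigl(R + y_1\cE_1 + y_2\cE_2 + y_1\cE'\bigr),
\]
and symmetrically for $y_2$. Conditioning on $R = r \geq 0$, it suffices to prove a two-variable inequality. For $a, b > 0$ and $r \geq 0$ define
\[
G_r(a,b) = \E Q_{\ell-1}\bigl(r + a\cE_1 + a\cE' + b\cE_2\bigr).
\]
The goal is then
\[
(a-b)\left(\frac{G_r(a,b)}{a} - \frac{G_r(b,a)}{b}\right) \leq 0, \qquad \ell = 0,1,2,3 .
\]
Equivalently, $a \mapsto G_r(a,b)/a$ compared with $b \mapsto G_r(b,a)/b$ is ``decreasing'' in the appropriate divided-difference sense.

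Everything in $G_r$ is explicit. The exponential part contributes
\[
\E e^{-(r + a\cE_1 + a\cE' + b\cE_2)} = \frac{e^{-r}}{(1+a)^2(1+b)} .
\]
The polynomial part of $Q_{\ell-1}$ contributes moments of $r + a\cE_1 + a\cE' + b\cE_2$ of order at most $\ell - 1 \leq 2$, which are polynomials in $r, a, b$ built from CHS polynomials $\hh_j(a,a,b)$ via $\E S^j = j!\,\hh_j$. So the target becomes an explicit inequality in $(a,b,r)$ for each $\ell \in \{0,1,2,3\}$. The case $\ell = 0$ is immediate, since $-e^{-r}\bigl(\frac{1}{a(1+a)} - \frac{1}{b(1+b)}\bigr)$ times $(a-b)$ is nonpositive because $u \mapsto \frac{1}{u(1+u)}$ is decreasing. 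For $\ell \geq 1$ I would factor out $(a-b)$ from $\frac{G_r(a,b)}{a} - \frac{G_r(b,a)}{b}$ and show the quotient has the right sign.

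To organise that sign check I would not expand brute force. Instead I would use the Taylor remainder representation
\[
Q_{\ell-1}(t) = \frac{1}{(\ell-1)!}\int_0^t (t-s)^{\ell-1}e^{-s}\,\dd s .
\]
Alternatively, I would peel off $r$ using $Q_{m}(r+u) = \sum_i \frac{(-r)^i}{i!}\cdots$ type identities, noting that $\partial_r$ of $Q_m(r+\cdot)$ equals $-Q_{m-1}(r+\cdot)$. The aim is to reduce the $r$-dependence to a nonnegative combination of the $r=0$ cases for lower indices, or to a monotonicity statement in $r$ that lets me set $r = 0$. At $r = 0$ the quantity is a rational function of $(a,b)$ whose numerator, after dividing by $(a-b)$, should be checked to have a sign. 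I expect this to be a polynomial with nonnegative coefficients, perhaps after the substitution $b = \lambda a$.

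The main obstacle is the case $\ell = 3$, i.e.\ the range $3 < p < 4$, together with the handling of the extra parameter $r$ there. This is the borderline case: Theorem~\ref{thm:Schur} fails beyond $p = 4$, so the inequality must be tight in some limit, presumably $a, b \to 0$ or $r \to \infty$. In that limit the leading terms of the Taylor expansion cancel, and a naive coefficient-sign argument may break. If the reduction in $r$ does not go through cleanly, I would first try to exhibit the $r$-derivative of the relevant expression as the $\ell-1$ expression with the opposite sign. This would give an induction on $\ell$ in which the case $r = 0$ for each $\ell$ is the only genuinely new polynomial inequality to check by hand.
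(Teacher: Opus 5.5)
\textbf{There is a genuine gap: a sign error reverses your conclusion, and the cases $\ell=1,2,3$ are not carried out.}

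The sign error is in $Q_\ell' = -Q_{\ell-1}$. From the definition, $Q_\ell' = Q_{\ell-1}$. For instance $Q_0=1-e^{-t}$ has $Q_0'=e^{-t}=Q_{-1}$, and your own Taylor-remainder formula shows every $Q_\ell$ is increasing. Hence $\partial_{y_1}F_\ell=+\E Q_{\ell-1}(R+y_1\cE_1+y_1\cE'+y_2\cE_2)$. Your target $(a-b)\bigl(G_r(a,b)/a-G_r(b,a)/b\bigr)\le 0$ is therefore the Ostrowski condition for Schur-\emph{concavity}.

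Concavity is what is actually true. ``Schur-convex'' in the statement is a misprint: the paper's proof concludes Schur-concavity in each case, which is what Theorem~\ref{thm:Schur} needs on $0\le p\le 4$. Schur-convexity already fails for $F_0=1-\prod_j(1+\sqrt{x_j})^{-1}$, since $F_0(1,0)=\frac12<1-(1+2^{-1/2})^{-2}\approx 0.657=F_0(\frac12,\frac12)$. So, as written, your chain would ``prove'' a false statement. Your $\ell=0$ sentence also has the wrong sign, since $-e^{-r}\bigl(\frac{1}{a(1+a)}-\frac{1}{b(1+b)}\bigr)(a-b)\ge 0$.

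For $\ell=1,2,3$ you only announce a strategy, and the sign matters there too. The ``opposite sign'' you expect for the $r$-derivative comes from the same slip. If it were right, the relevant quantity would decrease in $r$, and $r=0$ would be the wrong endpoint to check.

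With the correct sign your plan does close. Write $\frac{G_r(a,b)}{a}-\frac{G_r(b,a)}{b}=\frac{b-a}{ab}B_\ell(r)$, so Schur-concavity amounts to $B_\ell\ge 0$. With $s=a+b$ and $c=\frac{1+a+b}{(1+a)^2(1+b)^2}$ one gets
\[
B_0=e^{-r}c,\quad B_1=1-e^{-r}c,\quad B_2=e^{-r}c-1+r+s,\quad B_3=1-r+\tfrac{r^2}{2}-s+rs+a^2+b^2-e^{-r}c .
\]
Moreover $\partial_r B_\ell=B_{\ell-1}$, because $\partial_r$ turns $Q_{\ell-1}$ into $Q_{\ell-2}$. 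Since $B_0>0$, induction reduces everything to $B_\ell(0)\ge 0$, which is exactly the case $n=2$:
\begin{enumerate}
\item[($\ell=1$)] $c<1$.
\item[($\ell=2$)] $c\ge 1-s$. For $s<1$ use $ab\le s^2/4$ and $(1+s)-(1-s)(1+\frac s2)^4=\frac{s^2}{2}+s^3+\frac{7s^4}{16}+\frac{s^5}{16}$.
\item[($\ell=3$)] $1-s+a^2+b^2\ge c$. After multiplying by $(1+a)^2(1+b)^2$, the difference equals
\[
s(a-b)^2+\bigl(a^4+b^4-a^2b^2+2ab(a^2+b^2)\bigr)+ab(a+b)(2a^2-ab+2b^2)+a^2b^2(a^2+b^2),
\]
and every term is nonnegative.
\end{enumerate}

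\textbf{Comparison with the paper.} The paper keeps all $n$ variables, writes $F_\ell$ through $P,M_1,M_2,M_3$, and needs its Claim to absorb $\prod_{j\ge 3}(1+b_j)$ at $\ell=2$. Your conditioning plus the recursion in $r$ reduces everything to two variables. It is also the more robust route at $\ell=3$. There the correct bracket is $\hh_2(b)-b_1b_2+1-M_1-\frac{1+b_1+b_2}{(1+b_1)(1+b_2)}P$; the paper's display seems to omit the factor $1/3!$ coming from $Q_3$. The polynomial part $\hh_2(b)-b_1b_2-M_1$ of this bracket is negative for small $b$. So positivity comes only from cancellation against $1-\frac{1+b_1+b_2}{(1+b_1)(1+b_2)}P$, which your $r$-monotonicity handles cleanly.
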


\begin{lemma}\label{lm:failure}
Let $p > 4$. Function $x \mapsto M_p(x^2,1-x^2)$ has a local maximum in $(0, \frac{1}{\sqrt2})$, hence it fails to be monotone on $(0, \frac{1}{\sqrt2})$, hence $M_p$ is neither Schur-convex, nor Schur-concave.
\end{lemma}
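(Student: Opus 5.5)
The plan is to work with the parametrisation $u = x^2 \in [0,1]$ and the function
\[
h(u) = M_p(u, 1-u) = \E\left(\sqrt{u}\,\cE_1 + \sqrt{1-u}\,\cE_2\right)^p .
\]
This function is symmetric about $u = \frac12$. I will show two things: $h$ is increasing immediately to the right of $u=0$, and $h$ has a strict local minimum at $u=\frac12$ when $p>4$. Since $h$ is continuous on $[0,\frac12]$, its maximum over that interval must then be attained at an interior point. That gives a local maximum in $(0,\frac12)$, and so the claimed local maximum of $x\mapsto M_p(x^2,1-x^2)$ in $(0,\frac{1}{\sqrt2})$.

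For the Schur part, recall that the vectors $(u,1-u)$ with $u\in[0,\frac12]$ are totally ordered by majorisation: $(u,1-u)$ majorises $(u',1-u')$ whenever $u\le u'\le\frac12$. Hence Schur-convexity or Schur-concavity of $M_p$ would force $h$ to be monotone on $[0,\frac12]$, which an interior local maximum rules out.

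\emph{Behaviour near $0$.} For $u>0$, differentiation under the expectation (justified by dominated convergence) gives
\[
h'(u) = \frac{p}{2}\,\E\Bigl[\bigl(\sqrt u\,\cE_1+\sqrt{1-u}\,\cE_2\bigr)^{p-1}\Bigl(\tfrac{\cE_1}{\sqrt u}-\tfrac{\cE_2}{\sqrt{1-u}}\Bigr)\Bigr].
\]
As $u\downarrow0$, the first term behaves like $\frac{p}{2\sqrt u}\E[\cE_1\cE_2^{p-1}] = \frac{p\,\Gamma(p)}{2\sqrt u}\to+\infty$, while the second term stays bounded. So $h$ is strictly increasing on some interval $(0,\delta)$.

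\emph{Second-order expansion at $\frac12$.} Write $u=\frac12+\varepsilon$, $T=\cE_1+\cE_2$ and $D=\cE_1-\cE_2$. Expanding the square roots to second order,
\[
\sqrt u\,\cE_1+\sqrt{1-u}\,\cE_2 = \tfrac{1}{\sqrt2}\Bigl(T+\varepsilon D-\tfrac{\varepsilon^2}{2}T+O(\varepsilon^3)(\cE_1+\cE_2)\Bigr).
\]
Raising this to the power $p$ and taking expectations gives
\[
2^{p/2}h\bigl(\tfrac12+\varepsilon\bigr) = \E T^p + p\varepsilon\,\E[T^{p-1}D] + \varepsilon^2\Bigl(-\tfrac p2\E T^p + \tfrac{p(p-1)}{2}\E[T^{p-2}D^2]\Bigr)+O(\varepsilon^3).
\]
The linear term vanishes by the exchangeability of $(\cE_1,\cE_2)$. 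To evaluate the quadratic term, use the classical fact that, conditionally on $T$, the ratio $\cE_1/T$ is uniform on $[0,1]$ and independent of $T$. Hence $\E[(D/T)^2]=\E(2U-1)^2=\frac13$ for $U$ uniform on $[0,1]$, and $\E[T^{p-2}D^2]=\frac13\E T^p=\frac13\Gamma(p+2)$. The $\varepsilon^2$ coefficient is therefore
\[
\frac p2\,\Gamma(p+2)\Bigl(\frac{p-1}{3}-1\Bigr) = \frac{p(p-4)}{6}\,\Gamma(p+2),
\]
which is strictly positive exactly when $p>4$. (Consistently, it vanishes at $p=4$, matching the borderline of Theorem~\ref{thm:Schur}.) So $h$ has a strict local minimum at $u=\frac12$.

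\emph{Main obstacle.} The step needing the most care is justifying the expansion rigorously. This means controlling the $O(\varepsilon^3)$ remainder uniformly inside the expectation, which can be done with a Taylor bound using $(1+|\cdot|)^{p}$ times moments of $T$, all finite. Equivalently, one can compute $h''(\frac12)$ directly by differentiating twice under the expectation.

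With both facts in hand the conclusion is immediate. The function $h$ exceeds $h(0)$ somewhere in $(0,\delta)$, and $h$ is strictly less than $h(\frac12)$ just to the left of $\frac12$. Therefore the maximum of $h$ on $[0,\frac12]$ is attained at a point that is neither $0$ nor $\frac12$, giving the interior local maximum.
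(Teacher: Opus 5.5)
Your proof is correct. Its skeleton is the paper's: positive slope at the left endpoint, then a strict local minimum at the symmetric point because the second derivative there is a positive multiple of $p(p-4)$, hence an interior maximum, which contradicts monotonicity along the majorisation chain $(u,1-u)$, $0\le u\le\frac12$.

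The difference is in how the derivatives are obtained.
\begin{itemize}
\item \emph{The paper's route.} It first derives the closed form $M_p(a^2,b^2)=\Gamma(p+1)\frac{b^{p+1}-a^{p+1}}{b-a}$ from the explicit density of $a\cE+b\cE'$. It then differentiates the one-variable function $f(x)=M_p(x^2,1-x^2)/\Gamma(p+1)$, getting $f'(0)=1$, $f'(1/\sqrt2)=0$ and $f''(1/\sqrt2)=\frac13 2^{1-p/2}p(p+1)(p-4)$.
\item \emph{Your route.} You never need a closed form. The endpoint behaviour comes from differentiating under the expectation. The second-order coefficient comes from the fact that $\cE_1/(\cE_1+\cE_2)$ is uniform and independent of $\cE_1+\cE_2$.
\end{itemize}
Your coefficient $\frac{p(p-4)}{6}\Gamma(p+2)$ matches the paper's value after substituting $u=x^2$: since $h'(\frac12)=0$, one gets $f''(1/\sqrt2)=2h''(\frac12)/\Gamma(p+1)$.

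What each route buys:
\begin{itemize}
\item The paper's version is explicit one-variable calculus, although it leaves the derivative computations to the reader.
\item Yours explains where the threshold $p=4$ comes from. It is the competition between $-\frac p2$, coming from the concavity of the square roots, and $\frac{p(p-1)}{2}\cdot\frac13$, coming from the variance of the uniform split.
\item Yours also carries over directly to i.i.d.\ Gamma summands, with a Beta law replacing the uniform one.
\end{itemize}

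Two minor remarks. First, the remainder control you flag as the main obstacle is immediate. Since $|D|\le T$, the relative perturbation is $O(\varepsilon)$ uniformly, so the Taylor remainder is pointwise $O(\varepsilon^3)T^p$. Second, your compactness argument extracts a genuine local maximum a bit more cleanly than the paper's appeal to a zero of $f'$: the maximum of $h$ on $[0,\frac12]$ is attained at some $u^*$ with $h(u^*)>h(0)$ and $h(u^*)>h(\frac12)$. These strict inequalities are exactly what rules out monotonicity.
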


\begin{proof}[Proof of Lemma \ref{lm:F}]
We have, $\E \cE_j^m = m!$, thus
\begin{align*}
P(x) &= \E \exp\left(-\sum \sqrt{x_j}\cE_j\right) = \prod \frac{1}{1+\sqrt{x_j}}, \\
M_1(x) &= \E \left(\sum \sqrt{x_j}\cE_j\right) = \sum \sqrt{x_j}, \\
M_2(x) &= \E \left(\sum \sqrt{x_j}\cE_j\right)^2 = 2\sum x_j + \sum_{j \neq k} \sqrt{x_jx_k}, \\
M_3(x) &= \E \left(\sum \sqrt{x_j}\cE_j\right)^3 = 6\sum x_j^{3/2} + 6\sum_{j \neq k} x_j\sqrt{x_k} + \sum_{i \neq j \neq k} \sqrt{x_ix_jx_k}.
\end{align*}
We shall use the Ostrowski criterion, so we will also need the partial derivatives,
\begin{align*}
\frac{\partial P}{\partial x_1} &= \frac{-\frac{1}{2\sqrt{x_1}}}{(1+\sqrt{x_1})^2}\prod_{j\neq 1} \frac{1}{1+\sqrt{x_j}} = -\frac{1}{2\sqrt{x_1}(1+\sqrt{x_1})}P(x),\\
\frac{\partial M_1}{\partial x_1} &= \frac{1}{2\sqrt{x_1}},\\
\frac{\partial M_2}{\partial x_1} &= 2 + \frac{1}{\sqrt{x_1}}\sum_{j\neq 1}\sqrt{x_j} = 1+\frac{M_1(x)}{\sqrt{x_1}},\\
\frac{\partial M_3}{\partial x_1} &= 9\sqrt{x_1} + 6\sum_{j\neq 1}\sqrt{x_j} + \frac{3}{\sqrt{x_1}}\sum_{j\neq 1}x_j + \frac{3}{2\sqrt{x_1}}\sum_{j\neq k \neq 1} \sqrt{x_jx_k} \\
&= 3M_1(x) + \frac{3}{\sqrt{x_1}}\sum x_j + \frac{3}{2\sqrt{x_1}}\sum_{j\neq k \neq 1} \sqrt{x_jx_k}\\
&=3\sqrt{x_1}+\frac{3}{\sqrt{x_1}}\sum x_j + \frac{3}{2\sqrt{x_1}}\sum_{j\neq k} \sqrt{x_jx_k}
\end{align*}
For convenience, we will also use the variables $b_j \equiv \sqrt{x_j}$.

\emph{Case $\ell = 0$.} We have, $Q_0(t) = 1 - e^{-t}$, thus
\[
F_0(x) = 1 - P(x),
\]
therefore,
\[
\frac{\partial F_0}{\partial x_1} - \frac{\partial F_0}{\partial x_2} = \frac12\left(\frac{1}{b_1(1+b_1)} - \frac{1}{b_2(1+b_2)}\right)P(x) = \frac{(b_2-b_1)(1+b_1+b_2)}{2b_1b_2(1+b_1)(1+b_2)}P(x).
\]
When $x_1 > x_2$, equivalently $b_1 > b_2$, this difference of partial derivatives is negative, so $F_0$ is Schur-concave.

\emph{Case $\ell = 1$.}
We have, $Q_1(t) = e^{-t} -1 + t$, thus
\[
F_1(x) = P(x)-1 + M_1(x),
\]
therefore,
\begin{align*}
\frac{\partial F_1}{\partial x_1} - \frac{\partial F_1}{\partial x_2} &= -\frac{(b_2-b_1)(1+b_1+b_2)}{2b_1b_2(1+b_1)(1+b_2)}P(x) + \frac{b_2-b_1}{2b_1b_2} \\
&=\left(\frac{b_2-b_1}{2b_1b_2}\right)\left(1 - \frac{1+b_1+b_2}{(1+b_1)(1+b_2)}P(x)\right).
\end{align*}
Plainly $\frac{1+b_1+b_2}{(1+b_1)(1+b_2)} < 1$ and $P(x) < 1$, so the second bracket is positive.
As a result, when $x_1 > x_2$, equivalently $b_1 > b_2$, this difference of partial derivatives is negative, so $F_1$ is Schur-concave.

\emph{Case $\ell = 2$.}
We have, $Q_2(t) = -e^{-t} +1 - t + \frac{t^2}{2}$, thus
\[
F_2(x) = -P(x) + 1 - M_1(x) + \frac{1}{2}M_2(x),
\]
therefore,
\begin{align*}
\frac{\partial F_2}{\partial x_1} - \frac{\partial F_2}{\partial x_2} &= \frac{(b_2-b_1)(1+b_1+b_2)}{2b_1b_2(1+b_1)(1+b_2)}P(x) - \frac{b_2-b_1}{2b_1b_2} + \frac{b_2-b_1}{2b_1b_2}M_1(x)  \\
&=\left(\frac{b_2-b_1}{2b_1b_2}\right)\left(\frac{1+b_1+b_2}{(1+b_1)(1+b_2)}P(x)-1 + M_1(x)\right).
\end{align*}
This is perhaps less obvious, but the second bracket is positive.

\textbf{Claim.} For positive numbers $b_1, \dots, b_n$, we have
\[
\frac{1+b_1+b_2}{(1+b_1)(1+b_2)} > \left(1 - \sum b_j\right)\prod (1+b_j).
\]
As a result, when $x_1 > x_2$, equivalently $b_1 > b_2$, this difference of partial derivatives is negative, so $F_2$ is Schur-concave.

\emph{Case $\ell = 3$.} Again, we calculate that
\begin{align*}
	&\frac{\partial F_3}{\partial x_1} - \frac{\partial F_3}{\partial x_2}=\\ &\left(\frac{b_2-b_1}{2b_1b_2}\right)\left(6\sum b_i^2+3\sum_{i\neq j}b_ib_j-6b_1b_2-\frac{1+b_1+b_2}{(1+b_1)(1+b_2)}P(x)+1-M_1(x)\right).
\end{align*}
Note that $$6\sum b_i^2+3\sum_{i\neq j}b_ib_j-6b_1b_2-M_1(x)=3+3\left(\sum b_i\right)^2-6b_1b_2-\sum b_i\geq 0,$$
since $$6b_1b_2\leq\frac{3}{2}(b_1+b_2)^2\leq \frac{3}{2}(\sum b_i)^2$$ and 
$$\frac{1}{2}(\sum b_i)^2+\frac{1}{2}\geq\sum b_i.$$
\end{proof}

\begin{proof}[Proof of the claim.]
When $\sum b_j \geq 1$, the inequality is obvious, so suppose $\sum b_j  <1$. Note that
\[
\left(1 - \sum b_j\right)\prod (1+b_j) \leq e^{-\sum b_j}(1+b_1)(1+b_2)e^{\sum_{j>2} b_j} = e^{-(b_1+b_2)}(1+b_1)(1+b_2).
\]
Let $s = b_1 + b_2$. It suffices to show that
\[
(1+s)e^s > (1+b_1)^2(1+b_2)^2 = (1+s + b_1b_2)^2
\]
Since $b_1b_2 \leq (s/2)^2$, it suffices to show that
\[
(1+s)e^s > (1+s/2)^4, \qquad 0 < s  <1.
\]
This follows from
\[
(1+s)e^s - (1+s/2)^4 > (1+s)(1+s+s^2/2+s^3/6) - (1+s/2)^4 = s^3/6 + 5s^4/48 > 0.
\]
\end{proof}

\begin{proof}[Proof of Lemma \ref{lm:failure}]
For $a, b > 0$, the density of $a\cE + b\cE'$ at $x \geq 0$ is $\frac{e^{-x/a}-e^{-x/b}}{a-b}$, thus
\[ 
M_p(a^2, b^2) = \E(a\cE + b\cE')^p = \Gamma(p+1)\frac{b^{p+1}-a^{p+1}}{b-a}.
 \]
We consider
\[ 
f(x) = \frac{1}{\Gamma(p+1)}M_p(x^2, 1-x^2) = \frac{(1-x^2)^{\frac{p+1}{2}} - x^{p+1}}{\sqrt{1-x^2}-x} , \qquad 0 \leq x \leq \frac{1}{\sqrt{2}}
 \]
and argue that for a fixed $p > 4$, $f(x)$ fails to be monotone. We check that
\[ 
f'(0) = 1, \qquad f'(1/\sqrt{2}) = 0, \qquad f''(1/\sqrt{2}) = \frac{1}{3}2^{1-p/2}p(p+1)(p-4) > 0.
 \]
That means $f'(x)$ is strictly increasing near $x = \frac{1}{\sqrt{2}}$, thus $f'(x_0) < 0$ for some $x_0 < \frac{1}{\sqrt{2}}$. Consequently, $f'$ vanishes at some point in $(0, x_0)$, where it attains a local maximum.
\end{proof}

\section{Concluding remarks}

We would like to finish this paper with suggesting two rather natural avenues to strengthen our results, left for future investigations.

\subsection{The gamma distribution}

Theorem \ref{thm:p>2} can be generalised to weighted sums of independent Gamma random variables, in that \eqref{eq:main-p>2} still holds whenever $\cE_j$ has the $\Gamma(\gamma_j)$ distribution, with arbitrary $\gamma_j > 0$, $j = 1, \dots, n$ (i.e. with density $\Gamma(\gamma_j)^{-1}x^{\gamma_j-1}e^{-x}\1_{[0,+\infty)}(x)$). 

Our proof of Theorem \ref{thm:p>2} can be repeated almost verbatim for this setting. Of course one needs to first generalise the algebraic identities from Lemmas \ref{lm:by-parts} and \ref{lm:diff}: when one goes over the crucial calculation for the Lagrange's equations leading to \eqref{eq:crux}, it turns out that one only needs to supplement the said lemmas with Remark \ref{rem:lm-diff-gen} and the following identify: for a gamma random variable $X \sim \Gamma(\gamma)$ and independent exponential $\cE$, we have
\[ 
\E\Big[X \Phi(X)\Big ] = \gamma\E\Phi(X+\cE).
 \]
Another main point is that the characteristic function of the gamma distribution has \emph{almost} the same form as for the exponential one, $\phi_X(t) = (1-it)^{-\gamma}$, if $X \sim \Gamma(\gamma)$. Consequently, the crucial technical calculations and estimates from Lemmas \ref{lm:integral} and \ref{lm:special-fun-monot} can be employed unchanged. We omit the further details. 

For better clarity and transparency of main ideas, we have decided to keep the exposition of Theorem \ref{thm:p>2} specialised to the exponential distribution. Additionally, we feel that our proof scheme is robust enough to handle even more general settings, and it will be of interest to investigate that elsewhere.

The same goes for Theorem \ref{thm:Schur} --- in light of the fact that the result from \cite{CST0} pertaining to the case $-1 < p < 0$ is in fact proved therein for sums of independent Gamma random variables, we believe Theorem \ref{thm:Schur} continues to hold in this setting as well. Moreover, it would perhaps be meaningful to extend \cite{CST0} as well as our Theorem \ref{thm:Schur} beyond that setting.

\subsection{Log-convexity}

We conjecture that the Gaussian extremiser featured in Hunter's inequality \eqref{eq:Hunter} and in our Theorem \ref{thm:p>2}, \eqref{eq:main-p>2}, persists in the more general situation of comparing \emph{arbitrary} $L_p$ and $L_q$ norms of weighted sums of independent exponential random variables with mean $0$, as long as $p, q  \geq 2$. Namely, given $2 \leq p \leq q$, for every $n \geq 1$ and real numbers $x_1, \dots, x_n$ with $\sum_{j=1}^n x_j = 0$, we have
\[ 
\|X\|_p \leq \frac{\|G\|_p}{\|G\|_q}\|X\|_q, \qquad X = \sum_{j=1}^n x_j\cE_j.
 \]
The stipulation that $p \geq 2$ perhaps is \emph{not} stringent, but in light of the discussion from Remark \ref{rem:neg-mom},  for such inequality to hold, $p$ needs to be bounded away from $-1$. The stipulation that $\sum x_j = 0$ seems natural and, at the same time, some conditions forbidding $X$ from being close to shifted exponential random variables $\cE + a$ seem necessary --- it can be checked that the inequality fails as $a \to +\infty$. Finally, the stipulation that $X$ is a a sum of exponentials also is important and makes the question rather intriguing, as the obvious relaxation to the class of mean $0$ log-concave random variables cannot feature the Gaussian extremiser (by localisation methods, the extremiser is within the class of random variables with the piece-wise log-affine densities, see, e.g. discussions in Section 3.2 in \cite{MRTT}).

There is a compelling line of attack, via a slightly more general statement that given arbitrary real coefficients $x_1, \dots, x_n$ with $\sum_{j=1}^n x_j = 0$, the function
\[ 
p \mapsto \frac{\E|X|^p}{\E|G|^p}, \qquad X = \sum_{j=1}^n x_j\cE_j
 \]
is log-convex on $(2, +\infty)$. This is supported by the special \emph{symmetric} case: when  $n$ is even and $x_1 = -x_2$, $x_3 = -x_4, \ldots$, we can use the Gaussian mixture structure of the symmetric exponential distribution, as in the proof of Lemma \ref{lm:all-equal}, to conclude that the distribution of $X$ can be represented in the form $R\cdot G$, for a nonnegative random variable $R$, independent of $G$. Then, plainly,
$\frac{\E|X|^p}{\E|G|^p} = \E R^p$
is log-convex.




\begin{thebibliography}{9}


\bibitem{ACGV}
Aguilar, K., Chávez, Á., Garcia, S. R., Volčič, J., Norms on complex matrices induced by complete homogeneous symmetric polynomials. 
Bull. Lond. Math. Soc. 54 (2022), no. 6, 2078--2100.

\bibitem{Bal}
Balakrishnan, K., Exponential Distribution: Theory, Methods and Applications (1st ed.). Routledge, London, 1999.

\bibitem{Ball-cube}
Ball, K.,
Cube slicing in $\R^n$. 
Proc. Amer. Math. Soc. 97 (1986), no. 3, 465--473.



\bibitem{BMNO}
Bara\'nski, A., Murawski, D., Nayar, P., Oleszkiewicz, K.,
On the optimal $L_p-L_4$ Khintchine inequality.
Preprint (2025): arXiv:2503.11869.


\bibitem{BN}
Barthe, F., Naor, A.,
Hyperplane projections of the unit ball of $\ell_p^n$. 
Discrete Comput. Geom. 27 (2002), no. 2, 215--226. 



\bibitem{Bh}
Bhatia, R.,
Matrix analysis. 
Graduate Texts in Mathematics, 169. Springer-Verlag, New York, 1997.


\bibitem{BCG}
Bouthat, L., Chávez, Á., Garcia, S. R.,
Hunter's positivity theorem and random vector norms.
Operator theory, related fields, and applications, 149--215,
Oper. Theory Adv. Appl., 307, Birkhäuser/Springer, Cham, 2025.


\bibitem{BP}
Brazitikos, S., Pandis, C.,
Sharp inequalities for symmetric polynomials, Hunter's conjecture, and moments of exponential random variables.
Preprint (2025): arXiv:2512.12254.


\bibitem{Brz}
Brzezinski, P.,
Volume estimates for sections of certain convex bodies.
Math. Nachr. 286 (2013), no. 17-18, 1726--1743.



\bibitem{CGT}
Chasapis, G., Gurushankar, K., Tkocz, T., Sharp bounds on $p$-norms for sums of independent uniform random variables, $0 < p < 1$, 
J. Anal. Math. 149 (2023), no. 2, 529--553. 


\bibitem{CKT}
Chasapis, G., K\"onig, H., Tkocz, T.,
From Ball's cube slicing inequality to Khinchin-type inequalities for negative moments,
J. Funct. Anal. 281 (2021), no. 9, Paper No. 109185, 23 pp.


\bibitem{CNT}
Chasapis, G., Nayar, P., Tkocz, T.,  Slicing $\ell_p$-balls reloaded: stability, planar sections in $\ell_1$, Ann. Probab. 50 (2022), no. 6, 2344--2372.


\bibitem{CST0}
Chasapis, G., Singh, S., Tkocz, T.,
Entropies of sums of independent gamma random variables,  
J. Theoret. Probab. 36 (2023), no. 2, 1227--1242.


\bibitem{CST}
Chasapis, G., Singh, S., Tkocz, T.,
Haagerup's phase transition at polydisc slicing. Anal. PDE 17 (2024), no. 7, 2509–2539.

\bibitem{DDS}
De, A., Diakonikolas, I., Rocco A. S.,
A robust Khintchine inequality, and algorithms for computing optimal constants in Fourier analysis and high-dimensional geometry. 
SIAM J. Discrete Math. 30 (2016), no. 2, 1058--1094. 



\bibitem{Luc} 
Devroye, L., Nonuniform random variate generation, in Handbooks in operations research and management science, 13 (2006), 83--121.





\bibitem{EG}
Edwards, R. E., Gaudry, G. I., 
Littlewood-Paley and multiplier theory, Berlin, New York: Springer-Verlag, 1977.


\bibitem{ENT1}
Eskenazis, A., Nayar, P., Tkocz, T., Gaussian mixtures: entropy and geometric inequalities, Ann. of Prob. 46(5) 2018, 2908--2945.



\bibitem{ENT2}
Eskenazis, A., Nayar, P., Tkocz, T.,
Sharp comparison of moments and the log-concave moment problem,
Adv. Math. 334 (2018) 389--416.



\bibitem{Haa}
Haagerup, U.,
The best constants in the Khintchine inequality.
Studia Math. 70 (1981), no. 3, 231--283.

\bibitem{Hun}
Hunter, D. B.,
The positive-definiteness of the complete symmetric functions of even order. 
Math. Proc. Cambridge Philos. Soc. 82 (1977), no. 2, 255--258.

\bibitem{HNVW}
Hyt\"onen, T., van Neerven, J., Veraar, M., Weis, L., Analysis in Banach spaces. Vol. II. Martingales and Littlewood-Paley theory. Series of Modern Surveys in Mathematics  63. Springer, Cham, 2016.


\bibitem{KT}
Khare, A., Tao, T., 
On the sign patterns of entrywise positivity preservers in fixed dimension. 
Amer. J. Math. 143 (2021), no. 6, 1863--1929.

\bibitem{Khi}
Khintchine, A.,
\"Uber dyadische Br\"uche.
Math. Z. 18 (1923), no. 1, 109--116.



\bibitem{King}
Kingman, J. F. C., Poisson processes. Oxford Studies in Probability, 3. Oxford Science Publications. The Clarendon Press, Oxford University Press, New York, 1993.



\bibitem{KST}
K\"onig, H., Sch\"utt, C., Tomczak-Jaegermann, N.,
Projection constants of symmetric spaces and variants of Khintchine's inequality.
J. Reine Angew. Math. 511 (1999), 1--42.



\bibitem{LamT}
Lamkin, P., Tkocz, T., Log-concavity and log-convexity of moments of averages of i.i.d. random variables, \emph{Canad. Math. Bull.} 65 (2022), no. 2, 271--278.


\bibitem{LT}
Ledoux, M., Talagrand, M., 
Probability in Banach spaces. Isoperimetry and processes. 
Reprint of the 1991 edition. Classics in Mathematics. Springer-Verlag, Berlin, 2011


\bibitem{Mac}
Macdonald, I. G.,
Symmetric functions and Hall polynomials. Second edition. Oxford Classic Texts in the Physical Sciences. The Clarendon Press, Oxford University Press, New York, 2015.


\bibitem{MRTT}
Melbourne, J., Roysdon, M., Tang, C., Tkocz, T.,
From simplex slicing to sharp reverse Hölder inequalities, to appear in J. Lond. Math. Soc.,
Preprint (2025): arXiv:2505.00944.



\bibitem{MTTT}
Myroshnychenko, S., Tang, C., Tatarko, K., Tkocz, T., to appear in Discrete Comput. Geom.,
Stability of simplex slicing.
Preprint (2024): arXiv:2403.11994.



\bibitem{NT-surv}
Nayar, P., Tkocz, T., Extremal sections and projections of certain convex bodies: a survey. Harmonic analysis and convexity, 343--390, Adv. Anal. Geom., 9, De Gruyter, Berlin, 2023.

\bibitem{OP}
Oleszkiewicz, K., Pe\l czy\'nski, A.,
Polydisc slicing in $C^n$.
Studia Math. 142 (2000), no. 3, 281--294.




\bibitem{T}
Tang, C., 
Simplex slicing: An asymptotically-sharp lower bound,
Adv. Math. 451 (2024), 109784.


\bibitem{Tao-blog}
Tao, T., Schur convexity and positive definiteness of the even degree complete homogeneous symmetric polynomials, 2017. Blog post: \url{https://terrytao.wordpress.com/2017/08/06/}


\bibitem{Tao}
Tao, T.,
A variant of Maclaurin's inequality. 
Proc. Amer. Math. Soc. Ser. B 12 (2025), 1--13.




\bibitem{Webb}
Webb, S., 
Central slices of the regular simplex. 
Geom. Dedicata 61 (1996), no. 1, 19--28.

\bibitem{Wid}
 Widder, D. V., 
The Laplace Transform. Princeton Mathematical Series, vol. 6. Princeton University Press, Princeton, NJ, 1941.


\end{thebibliography}
\end{document}